\documentclass[11pt]{amsart}

\usepackage[margin=1in]{geometry}
\usepackage{amsmath,amssymb,amsthm,mathtools}
\usepackage{hyperref}
\usepackage{graphicx}
\usepackage{microtype}
\usepackage{xcolor}
\usepackage{enumitem}
\usepackage{booktabs}
\usepackage{listings}
\usepackage{algorithm}
\usepackage{algpseudocode}
\usepackage[most]{tcolorbox}

\newtheorem{conjecture}{Conjecture}

\newtheorem{lemma}{Lemma}
\newtheorem{definition}{Definition}
\newtheorem{remark}{Remark}

\newcommand{\sgn}{\mathrm{sgn}}
\newcommand{\Ln}{\mathcal{L}_n}
\newcommand{\E}{\mathcal{E}}
\newcommand{\Oset}{\mathcal{O}}
\newcommand{\Res}{\mathrm{Res}}

\title{Evolving Local Corrections for Global Constructions in Combinatorics}
\author{Gergely B\'erczi}
\address{Aarhus University}
\email{gergely.berczi@math.au.dk}

\date{}

\begin{document}

\begin{abstract}
Many open problems in combinatorics admit reformulations in which a global construction can be achieved by the repeated application of small, finite correcting steps. This paper presents three computational case studies of this principle, carried out using AlphaEvolve as an experimental engine for proposing and iteratively refining such certificates. The problems we studied are: reconstruction of bipartite and planar graphs from vertex-deleted subgraphs; the Alon-Tarsi parity problem for Latin squares, approached via sign-reversing involutions built from local trades; Rota's Basis Conjecture, studied through local exchange policies on collections of bases. In these three problems the correcting steps take the form of a reconstruction rule, a parity-reversing involution, and a transversal family of bases, respectively.

For each problem, we describe the experimental setup, the scoring protocols, and the outcomes of the searches, leading to concrete conjectures concerning the existence and structure of the relevant correcting steps. The aim is not to claim proofs, but rather to produce explicit algorithms and to reveal structural patterns that appear amenable to subsequent analysis by traditional mathematical methods.
\end{abstract}

\maketitle

\section{Introduction}
\label{sec:introduction}

This paper reports on a series of experiments centered around a common guiding idea: to use the evolutionary search model AlphaEvolve \cite{DeepMindAlphaEvolve2025,AlphaEvolveArxiv} as a construction-driven search engine for exploring difficult constructive problems in combinatorics. These are problems that assert the existence of objects subject to global constraints, but for which brute-force search is computationally infeasible.
The three problems we study are the reconstruction of graphs from their vertex-deleted subgraphs (with an emphasis on bipartite and planar families), the Alon-Tarsi conjecture on the parity imbalance of Latin squares, and Rota's Basis Conjecture on rearranging matroid bases into a basis-by-basis grid. 

Although the Alon-Tarsi conjecture implies the Rota Basis Conjecture in certain cases, at first look these problems belong to distinct areas of combinatorics.  From the viewpoint adopted here, however, they share a common structural feature: each induces an energy landscape on a vast discrete space $\mathcal{C}$ of candidate certificates. Valid certificates correspond to global minima, while the verifier assigns to each $C \in \mathcal{C}$
a vector of nonnegative defect measures that quantify its distance from validity. These defects, such as incorrect degrees or broken column conditions, define local contributions to the energy and naturally suggest local moves that decrease it, thereby organizing the search as a descent process on a highly structured combinatorial landscape.

Across the three projects we found it useful to treat the unknown certificate $C$ as something that should be built by a sequence of small correcting steps,
much as a numerical optimizer builds a minimizer by small descent steps.
Concretely, one fixes a space $\mathcal{C}$ of candidate objects (graphs, Latin squares, basis grids), together with a computable penalty or loss function $\mathcal{L}:\mathcal{C}\to \mathbb{R}_{\ge 0}$ whose zero set is precisely the set of valid certificates, and a family $\mathcal{M}$ of local moves $m:\mathcal{C}\to\mathcal{C}$ that preserve the ambient combinatorial type (for instance degree sequences, Latin constraints) and the loss function decreases at each move, that is $\mathcal{L}(m(C))<\mathcal{L}(C)$ holds for all $m\in \mathcal{M}, C\in \mathcal{C}$. 

The guiding heuristic is that, if a conjecture is true for global reasons, then there should exist a discrete Lyapunov trajectory, that is, 
a local repair dynamics that (typically) drives $\mathcal{L}$ downward and avoids the main traps.
AlphaEvolve is then used not to search directly for a single certificate $C$,
but to search for python scripts implementing a good repair dynamics, that is, a policy that repeatedly proposes moves in $\mathcal{M}$
that reduce $\mathcal{L}$.
This principle does not replace proof. Rather, it serves as a generator of explicit hypotheses such as which local moves seem sufficient and which deterministic tie-breaks prevent cycling.

The rest of this introduction presents the statements of the three conjectures we study in this paper, with more detailed background in the corresponding sections. 

\subsection{Graph Reconstruction Conjecture}
\label{sec:intro-graph}

Let $G$ be a finite simple undirected graph with vertex set $V(G)$ of size $n$.
For $v\in V(G)$ we write $G-v$ for the induced subgraph obtained by deleting $v$ and all incident edges.
The \emph{deck} of $G$ is the multiset of vertex-deleted subgraphs
\[
\mathcal{D}(G) = \{ G-v : v\in V(G)\}.
\]
Two graphs $G,H$ on $n$ vertices have the same deck if their decks agree as multisets of unlabeled graphs.
A graph $G$ is \emph{reconstructible} if every graph with the same deck is isomorphic to $G$.

\begin{conjecture}[Graph Reconstruction Conjecture (Ulam-Kelly) \cite{Ulam1960,Kelly1957}]
Every finite simple graph on at least three vertices is reconstructible from its deck.
\end{conjecture}

The conjecture is classical and has generated a large literature; survey-level entry points include \cite{BondyHemminger1977}.
Many special families are known to be reconstructible (for example trees and regular graphs), but no general proof is known.
At the opposite extreme, calculations have verified reconstructibility for all graphs up to modest size; see McKay's tabulations and notes \cite{McKaySmallReconstruction2021}. The deck forgets the labeling of vertices, so the challenge is not merely to recover an adjacency relation but to recover it \emph{up to isomorphism} from unlabeled local views.

The experiments in this paper focus on two regimes in which the deck retains nontrivial combinatorial rigidity while still admitting explicit computational checks. First, we study \emph{bipartite matrix reconstruction}.
A bipartite graph $G=(U\sqcup V,E)$ with $|U|=u$ and $|V|=v$ can be encoded by its $0-1$ incidence matrix $A\in\{0,1\}^{u\times v}$.
Deleting a vertex in $U$ or $V$ corresponds to deleting a row or a column, so the deck becomes a multiset of $(u-1)\times v$ and $u\times (v-1)$ sub-matrices.
This is a concrete reconstruction model in which the unlabeled aspect can be simulated by independently permuting rows and columns in each card.

After this we study reconstructibility of planar graphs. A basic structural question is whether planarity of graphs is itself determined by the deck and, more broadly, which planar graph subclasses are reconstructible or even recognizable. These issues are studied in substantial detail in \cite{BilinskiKwonYu2007}.
In this paper we study planar graphs of minimum degree at least $3$ that are not maximal planar.

\subsection{The Alon-Tarsi conjecture and the parity of Latin squares}
\label{sec:intro-alon}
A Latin square of order $n$ is an $n\times n$ matrix $L=(L_{r,c})_{r,c\in[n]}$ with entries in $[n]=\{0,1,\dots,n-1\}$
such that each symbol appears exactly once in each row and each column. Equivalently, each row and column is a permutation of $[n]$.
For a row $r$, define the permutation $\rho_r\in S_n$ in one-line notation by
\[
\rho_r(c)=L_{r,c}\qquad(c\in[n]),
\]
and for a column $c$ define $\kappa_c\in S_n$ by
\[
\kappa_c(r)=L_{r,c}\qquad(r\in[n]).
\]
\begin{definition}[Parity of Latin squares]
For a Latin square $L$ of order $n$, define
\[
\sgn(L):=\prod_{r=0}^{n-1}\sgn(\rho_r)\cdot \prod_{c=0}^{n-1}\sgn(\kappa_c)\in\{+1,-1\}.
\]
Call $L$ even if $\sgn(L)=+1$ and odd if $\sgn(L)=-1$.
\end{definition}

Let $\E_n$ (resp.\ $\Oset_n$) denote the set of even (resp.\ odd) Latin squares of order $n$ and write
$E_n=\lvert\E_n\rvert$, $O_n=\lvert\Oset_n\rvert$.

\begin{conjecture}[Alon-Tarsi]
If $n$ is even, then
\[
E_n-O_n\neq 0,
\qquad\text{equivalently}\qquad
\sum_{L\in\Ln}\sgn(L)\neq 0.
\]
\end{conjecture}

A standard algebraic encoding explains why this is natural. Let $X=(x_{i,j})$ be an $n\times n$ matrix.
Expanding $\det(X)^n$ produces monomials that correspond to Latin squares, and the coefficient of
$\prod_{i,j}x_{i,j}$ is precisely $\sum_{L\in\Ln}\sgn(L)$. Thus the conjecture is equivalent to the nonvanishing of the coefficient of $\prod_{i,j}x_{i,j}$. This viewpoint connects naturally to polynomial-method tools such as the Combinatorial Nullstellensatz \cite{Alon1999}.

For odd $n$, there is a straightforward sign-reversing involution that forces $E_n=O_n$.
Indeed, swapping two rows (or two columns) is an involution on $\Ln$.
Under the sign convention above, swapping two \emph{rows} multiplies each column permutation by a transposition, so the product of column signs picks up a factor $(-1)^n$; for odd $n$ this is $-1$ and hence $\sgn$ flips. Consequently, for odd $n$ one obtains a sign-reversing involution on all of $\Ln$ and thus $E_n=O_n$.
For even $n$, the same row/column swap is sign-preserving, and no such trivial cancellation is available.

The parity problem for Latin squares is part of a larger circle of ideas in which an algebraic identity controls the existence of a combinatorial object. In their foundational work on the method now called the \emph{Alon-Tarsi method}, Alon and Tarsi related signed enumerations to coloring and orientation problems in graphs \cite{AT92}. Several related determinantal and hyperdeterminantal formulations have also been developed; see for instance Zappa's analysis of Cayley-type determinants in this context \cite{Zappa1997}.

The full Alon-Tarsi conjecture remains open, but nontrivial cases are known.
Drisko proved the conjecture for $n=p+1$ when $p$ is an odd prime \cite{Dr97}, and Glynn proved it for $n=p-1$ when $p$ is an odd prime \cite{Gly10}.
Further extensions and related parity questions are surveyed in the works such as \cite{Sto12,FriedmanMcGuinness2019}.
For odd $n$ the global parity difference vanishes by an elementary row or column swap (as we have seen above), but fixed-diagonal and other constrained variants exhibit subtler behaviour; see, for example, Drisko's fixed-diagonal formulation \cite{Dr98} and more recent asymptotic parity results \cite{CavenaghWanless2016}.

If one can construct a map $F:\Ln\to\Ln$ such that $F$ is an involution, that is, $F(F(L))=L$), $F(L)\neq L$ for most $L$ (so it pairs objects), and $\sgn(F(L))=-\sgn(L)$ for those paired $L$, then the signed sum collapses to the signed sum over the \emph{residual set}
\[
\Res:=\{L\in\Ln: \sgn(F(L))=\sgn(L)\ \text{or $F$ fails to pair $L$ in some way}\}.
\]
In that case
\[
\sum_{L\in\Ln}\sgn(L)=\sum_{L\in\Res}\sgn(L),
\]
so any proof that $\sum_{L\in\Res}\sgn(L)\neq 0$ would establish the conjecture.
The experiments below are best understood as algorithmic attempts to realize this philosophy: evolve a \emph{canonical} local trade that flips sign on most inputs and leaves a small residual with dominant sign. 

\subsection{Rota Basis Conjecture}
Let $M$ be a matroid of rank $n$ on ground set $E$.
Let $B_1,\dots,B_n$ be \emph{pairwise disjoint} bases of $M$.
A \emph{transversal} of $(B_1,\dots,B_n)$ is a set $T\subseteq \bigcup_i B_i$ that meets each $B_i$ in exactly one element.
Equivalently, it is the image of a system of distinct representatives for the partition $(B_1,\dots,B_n)$.

\begin{conjecture}[Rota Basis Conjecture (matroid form)]
\label{conj:RBC}
Given $n$ disjoint bases $B_1,\dots,B_n$ in a rank-$n$ matroid $M$, there exist $n$ pairwise disjoint transversals $T_1,\dots,T_n$ of $(B_1,\dots,B_n)$ such that each $T_j$ is a basis of $M$.
\end{conjecture}

In the linear case, one may take $M$ to be the matroid of column-independence of vectors in an $n$-dimensional vector space $V$.
Then each $B_i$ is a basis of $V$, and a transversal basis is a rainbow basis containing exactly one vector from each $B_i$.
The conjecture asserts that one can partition the multiset $\bigsqcup_i B_i$ into $n$ disjoint rainbow bases.

Rota formulated this conjecture in the late 1980s, and it has since become a central open problem in matroid theory.
Already in early work one sees strong links to determinant identities and to Latin-square phenomena; for example, Onn exhibited a colorful determinantal identity that ties the conjecture to Latin squares \cite{Onn1997}, and Huang-Rota connected the conjecture to Latin-square straightening coefficients \cite{HuangRota1994}.
Chow later reduced Rota's conjecture to a problem involving only three bases \cite{Chow2009}, and Wild proved the conjecture for certain structured instances \cite{Wild1994}.
These connections explain why Rota's conjecture and the Alon-Tarsi conjecture are often discussed in the same breath, and why parity-type information can be unexpectedly relevant (cf.\ \cite{Gly10}).

A number of matroid classes are known to satisfy the conjecture or strong variants.
For example, Geelen and Humphries proved Rota's conjecture for paving matroids \cite{GeelenHumphries2006}, and Geelen-Webb obtained systematic packing and exchange results that guarantee many disjoint transversal bases under additional hypotheses \cite{GeelenWebb2007}.
In the linear setting, substantial quantitative progress has been made in recent years: Buci\'c, Kwan, Pokrovskiy, and Sudakov proved a result guaranteeing about $n/2$ disjoint rainbow bases \cite{BucicKwanPokrovskiySudakov2020}, Sauermann established the conjecture for random bases in a natural probabilistic model \cite{Sauermann2024}, and Pokrovskiy proved an asymptotic form of the conjecture \cite{Pokrovskiy2025}.
Montgomery and Sauermann obtained asymptotically tight packing and covering bounds in related rainbow-base problems \cite{MontgomerySauermann2025}.
Algorithmic and computer-assisted approaches have also entered the picture; for example, Kirchweger, Scheucher, and Szeider used SAT-based methods to verify Rota-type statements in small ranks \cite{KirchwegerScheucherSzeider2022}.

Our experiments focus on a small but instructive linear regime rank $\le 13$ over $\mathbb{F}_2$. The goal is not to resolve the conjecture, but to use scored local-exchange search to find explicit policies that reliably repair invalid columns and reach a full set of transversal bases on adversarially structured test benchmarks.

\subsection{How the experiments were run}

This work follows an experimental rather than a traditional theorem-proof methodology. 
For each of the three problems, the test sets were constructed in advance using standard mathematical judgement and guidance from the literature, and were then fixed as part of the AlphaEvolve evaluation harness. We tried to include difficult cases in the test benchmarks: bipartite and planar graphs with hard reconstruction problems, Latin squares with nontrivial parity structure, and collections of bases close to known obstructions for Rota’s Basis Conjecture. The goal was to bias the search toward methods that handle a broad range of challenging cases, rather than heuristics tuned to typical examples.
As a geometer rather than a specialist in these areas, I approached these problems somewhat outside my primary field of expertise, and the topics therefore lie partially outside my usual comfort zone.

For each problem, approximately ten independent experiments were run, with durations ranging from a few hours to two days. Running multiple shorter experiments, rather than a single long run, proved essential for iteratively improving the test sets and scoring system, closing unintended loopholes, and adjusting the search strategy in response to observed behavior.

AlphaEvolve includes a feature that produces a Gemini-generated AI summary of an experimental run. Our runs typically last 15–24 hours and generate an evolutionary trace of more than $5000$ candidate results and algorithms. The summary distills this trace into a compact narrative of the search dynamics: the main stages of progress, key turning points and breakthroughs where genuinely new ideas appear, and the detours or failure modes that consumed significant search effort.

In practice, these summaries are valuable both diagnostically and scientifically. They help us quickly identify mistakes, dead ends, and other red flags, guiding redesign of the scoring rules, search space, or constraints for subsequent runs. They also provide a structured, human-readable report of how discoveries emerged. We include some of these AI-generated summaries for our runs in the appendices.

\subsection*{Declaration of use of AI}
This is an experimental mathematics paper, and a variety of AI-based tools were used extensively throughout the project. The AlphaEvolve user interface relies on gemini-2.5-pro and gemini-3-pro models to generate summaries of the problem setup, describe candidate strategies, suggest and write the evolve blocks and evaluation harnesses and also write AI summaries of the experiments. These capabilities were used extensively during the exploratory and iterative phases of the project. We also used code generation outside the AlphaEvolve system for vibe-coding evaluation harnesses. AlphaEvolve automatically produces detailed, Gemini-generated experiment reports, including performance summaries and qualitative analyses of evolved programs. These reports served as a starting point for several sections of this paper; however, the mathematical conclusions, conjectures, and counterexamples presented here arise from independent mathematical analysis. Except as otherwise indicated, the contents of this paper are the work of the author.

\subsection*{Acknowledgements}
I would like to thank the Google DeepMind AlphaEvolve team for the opportunity to be part of the first Trusted Users group with early access to AlphaEvolve, and for the continued support of the AlphaEvolve team throughout this project. 
I am especially grateful to Adam Zsolt Wagner, who was my first point of contact and an exceptionally helpful guide to the UI platform. I learned greatly from the many discussions and exchanges within the community of AlphaEvolve testers and mathematicians.

\section{AlphaEvolve as an experimental engine for mathematicians}
\label{sec:alphaevolve}

The experiments in this paper were carried out by using the evolutionary search model AlphaEvolve \cite{DeepMindAlphaEvolve2025,AlphaEvolveArxiv}. In this model the object being optimized through an evolutionary process  is not a numerical parameter but a (short) Python program.
The guiding idea is that many combinatorial existence problems admit a checkable formulation: once a candidate certificate is proposed (a reconstruction, an involution, a sequence of local moves), a fixed piece of code can decide whether it is correct and can usually quantify how far it is from being correct through a numerical scoring system.
AlphaEvolve uses the scoring system to guide the search engine by repeatedly rewriting the code that proposes candidates.

Fix a space $\mathcal{P}$ of short programs, for example Python functions of a prescribed signature.
In each experiment we distinguish two components.
\begin{enumerate}
    \item The first component is an \emph{evaluation harness}, a fixed module $H$ that encodes the mathematics of the problem.
Given a program $p\in\mathcal{P}$, the harness executes $p$ on a prescribed collection of instances and returns a structured report $H(p)=\bigl(s(p),\mathrm{aux}(p)\bigr)$,
where $s(p)\in\mathbb{R}$ is a scalar score used for selection and $\mathrm{aux}(p)$ is auxiliary data (for example counters, plots, or per-instance outcomes).
\item The second component is an \emph{evolvable block}, the part of the code that is allowed to change between iterations; in our experiments this was typically a single function (often called \texttt{find\_best\_certificate}) whose job is to propose a candidate object to be checked by $H$.
\end{enumerate}

AlphaEvolve repeatedly proposes small modifications $p\mapsto p'$ to the evolvable block and retains those modifications when the score improves.
One can view this as a form of program search guided by quantitative feedback: a reinforcement learning model on the state of python codes, where actions are LLM generated and the value of states is determined by the primary score. 
A detailed description of the AlphaEvolve model is given in \cite{AlphaEvolveArxiv,DeepMindAlphaEvolve2025,GeorgievGomezSerranoTaoWagner2025}.

\subsection{Harnesses as mathematical specifications}
For the purposes of mathematical work, it is helpful to think of the harness as \emph{immutable mathematics}.
Changing the evolvable block changes the search strategy; changing the harness changes the meaning of the experiment.
In particular, the harness specifies:
(i) the instance distribution or fixed test set;
(ii) the definition of validity (what outputs are rejected as meaningless);
(iii) the scoring system $s(p)$ which guides the search through the evolutionary process. .

In all three problems, the underlying mathematical statement is invariant under a large symmetry group (relabeling vertices, isotopys of Latin squares, change of basis in a vector space).
A naive score that evaluates only one representative per orbit encourages overfitting to arbitrary labels.
For that reason, the harnesses in this paper typically evaluate a candidate on several random symmetry actions and then aggregate by a \emph{worst-case} or near-worst-case statistic.
Concretely, if $\sigma_1,\dots,\sigma_S$ are independent random symmetry operations applied to the same underlying instance, we use scores of the form
\[
\mathrm{score}(\text{instance}) = \min_{1\le t\le S}  \mathrm{score}(\text{scramble}(t)),
\]
or a closely related robust aggregate.
This turns symmetry-invariance from a background fact into an explicit pressure in the objective.

A second recurring feature is the use of \emph{hard constraints} together with \emph{soft progress measures}.
For example, in the Alon-Tarsi experiments we treat "being a well-defined involution on the test set" as non-negotiable; among such candidates we then optimize the sign-flip rate and secondary diagnostics.
In the Rota experiments, success is rare enough that we add intermediate rewards for partial progress (such as increasing the number of valid columns) so that the score landscape is not completely flat.
These design choices are not cosmetic: without a shaped smooth scoring system, iterative program search tends to either stagnate or exploit loopholes in the evaluator.

Although the three mathematical problems look different, in each case the experimental search can be phrased as learning a sequence of small correcting steps.
In graph reconstruction, one tries to align unlabeled cards and incrementally fix a hypothesized adjacency matrix.
In the Latin-square setting, one applies local trades that change the parity while preserving the Latin property.
In the matroid setting, one performs local exchanges guided by short dependence certificates (circuits) in order to repair invalid columns.
At the level of the evolvable code, AlphaEvolve likewise proceeds by a sequence of local edits to the proposal program.
This local correction viewpoint was a practical guide in designing harnesses: we aimed to score not only final success, but also whether a candidate move makes a configuration more consistent with the constraints, so that incremental improvement is visible to the search.

\subsection{Toy AlphaEvolve experiment: packing equal circles in the unit square}
To make the above discussion concrete, consider the classical packing problem: place $N$ equal-radius circles in the unit square $[0,1]^2$ so as to maximize the common radius.
This is an optimization problem rather than a conjecture, but it has the same structure.

A natural harness represents a candidate by a list of centers $c_1,\dots,c_{N}\in[0,1]^2$ and defines the induced feasible radius
\[
r(c_1,\dots,c_{N}) = \min\Bigl( \min_{i}\{x_i,1-x_i,y_i,1-y_i\},\frac12\min_{i<j}\|c_i-c_j\|_2 \Bigr),
\]
where $c_i=(x_i,y_i)$.
The harness rejects malformed outputs, computes $r$, and returns $s=r$ (or $s=r$ minus a small penalty for numerical instability).
The evolvable block is then simply a function that proposes centers; AlphaEvolve iteratively rewrites that function so as to increase the returned radius.

The point of the example is that nothing in the loop depends on geometry in any special way.
Once the checker is correct, one is free to let the proposal code range from hard-coded patterns to randomized local improvement heuristics, and the system will empirically discover which modifications increase $r$ on the test protocol.
In the three problems of this paper, the certificate objects are more discrete, but the experimental logic is the same.

\subsection{What AlphaEvolve does and does not do}
It is important to emphasize what AlphaEvolve is \emph{not} doing in these experiments.
It is not proving theorems, and it is not replacing the mathematical step of turning a finite test-set phenomenon into a general statement.
What it provides is a way to explore a large space of \emph{explicit, checkable constructions} while keeping the meaning of success fixed in a harness.
The role of the mathematician remains to (i) design harnesses whose scores genuinely reflect the intended mathematics, (ii) interpret the resulting constructions as conjectures or proof sketches, and (iii) supply the human arguments needed to pass from experiments to theorems.

\section{The Graph Reconstruction Conjecture}
\label{sec:partIintro}

The reconstruction problem asks when a finite simple graph $G$ is determined up to isomorphism by the multiset
\[
\mathcal{D}(G)=\{G-v : v\in V(G)\}
\]
of vertex-deleted subgraphs (the \emph{deck} of $G$).
This is the setting of the classical Reconstruction Conjecture of Ulam and Kelly \cite{Ulam1960,Kelly1957}; see \cite{BondyHemminger1977} for survey-level background.

In this section we we report on using AlphaEvolve to search for explicit reconstruction procedures in two complementary regimes.
The first is a bipartite matrix deck model, in which a bipartite graph $G=(U\sqcup V,E)$ is represented by its $0$-$1$ incidence matrix $A\in\{0,1\}^{u\times v}$ and the deck consists of all row- and column-deletions of $A$.
The second is a planar regime, restricting to connected, biconnected planar graphs of minimum degree at least three that are not maximal planar.
In both settings the goal is algorithmic: given an unlabeled deck, output a graph $\widehat{G}$ (or matrix $\widehat{A}$) satisfying $\widehat{G}\cong G$.

\subsection{Experimental protocol and scoring}

A central experimental issue is to prevent label leakage.
If the cards of the deck inherit a consistent vertex ordering, reconstruction can exploit this hidden structure and does not address the genuine unlabeled problem.
To avoid this, each card is independently relabeled before being passed to the candidate reconstruction procedure.
In the bipartite setting we apply independent random permutations of rows and columns to each card, and in the planar setting we apply an independent random permutation of vertices.
Any successful method must therefore be permutation-invariant, up to the intrinsic automorphisms of the underlying graph.

Robustness is enforced by symmetrization.
For each underlying instance the harness generates $S$ independent relabelings of the same deck and aggregates by the worst case.
Thus if $\mathcal{D}^{(1)},\dots,\mathcal{D}^{(S)}$ are independent scramblings of the same deck and $H$ is the candidate reconstruction, the instance score is
\[
S_{\mathrm{instance}}(H)
=
\min_{1\le s\le S} S(H;\mathcal{D}^{(s)}).
\]
This strongly penalizes procedures that accidentally depend on a particular labeling.
The harness additionally stores a buffer of hard failures and replays them in later generations, forcing the search to deal with persistent hard cases instead of overfitting to the random generator.

We now describe the scoring functional $S(H;\mathcal{D})$ for a fixed scrambled deck $\mathcal{D}$.
The score takes values in $[0,1]$ and combines three normalized components.

The primary term is the \emph{deck-agreement score}
\[
s_{\mathrm{deck}}(H;\mathcal{D})
=
\frac{1}{n}
\sum_{[F]}
\min \left(
\mathrm{mult}_{\mathcal{D}}([F]),
\mathrm{mult}_{\mathcal{D}(H)}([F])
\right),
\]
where the sum ranges over isomorphism classes $[F]$ of $(n-1)$-vertex graphs and $\mathrm{mult}$ denotes multiplicity in the corresponding multiset.
Cards are compared up to graph isomorphism; in early experiments this was approximated by fast canonicalization procedures, and later replaced by exact isomorphism checks (for bipartite graphs via VF2-type backtracking with bipartition encoded as vertex colors).
The equality $s_{\mathrm{deck}}=1$ holds if and only if $\mathcal{D}(H)=\mathcal{D}$ as multisets.

Two auxiliary terms provide partial credit when exact reconstruction has not yet been achieved.
The \emph{degree score} $s_{\deg}$ compares the reconstructed degree multiset with that implied by the deck, and the \emph{edge-count score} $s_E$ compares total edge counts.
Both are normalized to lie in $[0,1]$ by linear absolute-error scaling.
These invariants are reconstructible from the deck and correlate with correctness, but are not sufficient for reconstruction.
Empirically they were essential in early generations, when exact deck agreement was too sparse a signal to guide the search.

The combined score on a single scramble is
\[
S(H;\mathcal{D})
=
w_{\mathrm{deck}} s_{\mathrm{deck}}
+
w_{\deg} s_{\deg}
+
w_E s_E,
\]
with $w_{\mathrm{deck}}$ chosen dominant.
Thus improvement of the deck-agreement term always outweighs any gain in auxiliary terms, while the latter smooth the search landscape before the first perfect reconstructions appear.

\subsubsection{Results}
After $\approx 2$ days of running, AlphaEvolve arrived to a reconstruction algorithm that achieved perfect reconstruction on the test benchmark of size $1420$:
\[
\text{successful reconstructions} = 1420/1420,\qquad
\text{worst-of-5-scrambles deck score} = 1.00\text{ for all cases.}
\]
No hard-negative failures were retained in the final generation.

For a mathematical reformulation of the evolved bipartite algorithm the final evolved bipartite method is best understood as a pipeline that begins by reconstructing \emph{global degrees} and \emph{neighbor-degree profiles} from the deck, then aggregates vertices into \emph{types} $(\deg,\text{profile})$ and derives the corresponding \emph{type-count constraints} describing how many neighbors of each opposite type each vertex must have. It then uses \emph{two-deletion compatibility signatures} to score candidate adjacencies and solves the resulting constrained assignment problem via min-cost flow separately on each type block. We now attempt to formalize the key invariants it exploits.

\begin{enumerate}
    \item \emph{Edge and degree reconstruction}
Let $n=u+v$, and write $m=|E(G)|$.
In the matrix model, $m$ equals the number of ones in $A$.
Let the deck consist of matrices $C_1,\dots,C_{u+v}$ with ones-counts $|C_t|_1$.
Each edge survives deletion of any vertex except its two endpoints, hence appears in exactly $n-2$ cards. Therefore
\[
m = \frac{1}{n-2}\sum_{t=1}^{u+v} |C_t|_1.
\]
For a row-deletion card corresponding to $u_i\in U$, we have
\[
\deg(u_i) = m - |C(u_i)|_1,
\]
and similarly for $v_j\in V$.

\item \emph{Neighbor-degree profile reconstruction (a key lemma)}
For a fixed $u\in U$, define the multiset
\[
P(u) = \{\deg(v) : v\in N(u)\}\quad\text{(with multiplicity)}.
\]
The evolved algorithm reconstructs $P(u)$ by comparing the degree multiset of $V$ in $G$ with the degree multiset of $V$ in the card $G-u$.

\begin{lemma}[Recovering neighbor-degree profiles from one deletion]\label{lem:profile}
Let $G=(U\sqcup V,E)$ be bipartite. Fix $u\in U$ and let $G' = G-u$.
Let $g_k$ be the number of vertices $v\in V$ with $\deg_G(v)=k$, and let $c_k$ be the number of vertices $v\in V$ with $\deg_{G'}(v)=k$.
Let $a_k$ be the number of neighbors $v\in N(u)$ with $\deg_G(v)=k$.
Then for all $k\ge 0$,
\[
c_k = g_k - a_k + a_{k+1}.
\]
Consequently, knowing the sequences $(g_k)_{k\ge 0}$ and $(c_k)_{k\ge 0}$ determines $(a_k)_{k\ge 0}$ uniquely by downward recursion.
\end{lemma}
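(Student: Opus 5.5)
Since $G$ is bipartite, every neighbour of $u$ lies in $V$. Deleting $u$ therefore lowers the degree of each $v\in N(u)$ by exactly one and leaves the degree of every $v\in V\setminus N(u)$ unchanged. The plan is to count, for each fixed $k$, the vertices of $V$ with $\deg_{G'}(v)=k$ by splitting them according to whether or not they are adjacent to $u$.

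\textbf{Step 1: the identity.} A vertex $v\in V\setminus N(u)$ has $\deg_{G'}(v)=k$ exactly when $\deg_G(v)=k$. The number of such vertices is $g_k-a_k$, because $a_k$ counts the vertices of $G$-degree $k$ that lie in $N(u)$. A vertex $v\in N(u)$ has $\deg_{G'}(v)=k$ exactly when $\deg_G(v)=k+1$, and there are $a_{k+1}$ of these. The two classes partition $V$, so adding them gives
\[
c_k=(g_k-a_k)+a_{k+1}.
\]
This holds for every $k\ge 0$, including $k=0$: a vertex of $G$-degree $0$ is never a neighbour of $u$, so $a_0=0$.

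\textbf{Step 2: uniqueness by downward recursion.} Let $D=\max_{v\in V}\deg_G(v)$. Then $a_k=0$ for all $k>D$, and in fact for all $k>|U|$. Rearranging the identity gives
\[
a_k=a_{k+1}+g_k-c_k .
\]
Starting from $a_{D+1}=0$ and recursing downward, each $a_k$ is determined by the known sequences. Equivalently, telescoping gives the closed form $a_k=\sum_{j\ge k}(g_j-c_j)$. This sum is finite because $g_j=c_j=0$ for $j>D$.

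\textbf{Main obstacle.} There is no real difficulty here; the only points needing care are the ones that make the recursion well founded. First, one must note that all the sequences have finite support, so the recursion has a legitimate starting point. Second, one should observe that the argument uses bipartiteness only to guarantee $N(u)\subseteq V$. In the intended application, $(g_k)$ is the $V$-side degree multiset recovered as in step (1) of the reconstruction pipeline, and $(c_k)$ is read off from the card. It is worth remarking that $(c_k)$ can be read off without knowing which card belongs to which vertex. For the card $G-u$, its column-degree multiset is intrinsic to the card and is unaffected by the row and column scrambling.
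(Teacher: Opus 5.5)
Your proof is correct and is essentially the paper's argument. You partition $V$ by adjacency to $u$, note that deleting $u$ lowers exactly the degrees in $N(u)$ by one to get $c_k=(g_k-a_k)+a_{k+1}$, and solve backwards from $a_{\Delta+1}=0$; your telescoped form $a_k=\sum_{j\ge k}(g_j-c_j)$ is just a compact restatement of that same downward recursion.
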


\begin{proof}
Deleting $u$ decreases the degree of a vertex $v\in V$ by $1$ iff $v\in N(u)$, and leaves it unchanged otherwise.
Hence, among vertices in $V$ of degree $k$ in $G$, exactly $a_k$ drop to degree $k-1$ in $G'$. This contributes $-a_k$ to the count at degree $k$ in $G'$.
Meanwhile, among degree-$(k+1)$ vertices in $G$, exactly $a_{k+1}$ drop into degree $k$ in $G'$, contributing $+a_{k+1}$.
All other vertices retain their degree. Thus $c_k = g_k - a_k + a_{k+1}$.
If degrees are bounded by $\Delta$, set $a_{\Delta+1}=0$ and solve backwards:
$a_\Delta = g_\Delta - c_\Delta$, then $a_{\Delta-1}= g_{\Delta-1}+a_\Delta - c_{\Delta-1}$, etc.
\end{proof}

The algorithm defines a \emph{type} for each vertex as
\[
\tau(u) = (\deg(u), P(u)).
\]
This typically refines degrees substantially in generic random regimes.

\item \emph{Type-count constraints and blockwise flow}
Let $\mathcal{T}_U$ be the multiset of types of $U$ vertices and $\mathcal{T}_V$ the multiset of types of $V$ vertices.
From a row-deletion card $G-u$, one can count how many $V$-vertices of each type appear \emph{in that card}.
By Lemma~\ref{lem:profile}, the algorithm can infer how many $V$-vertices of each global type must have been adjacent to $u$ (because adjacency shifts the neighbor's degree and neighbor-profile in the card in a predictable way).

This yields, for each $u\in U$ and each type $t\in \mathcal{T}_V$, a requirement number
\[
R(u,t) = \#\{ v\in V : \tau(v)=t,\ (u,v)\in E\}.
\]
Similarly one obtains $R(v,s)$ for $v\in V$ and $s\in\mathcal{T}_U$.

Given these constraints, reconstruction reduces to: for each pair of types $(s,t)$, determine a bipartite subgraph between $U_s=\{u:\tau(u)=s\}$ and $V_t=\{v:\tau(v)=t\}$ such that each $u\in U_s$ has exactly $R(u,t)$ neighbors in $V_t$ and each $v\in V_t$ has exactly $R(v,s)$ neighbors in $U_s$.
This is a feasible transportation/degree-constrained bipartite realization problem, solvable by flow when a feasible solution exists.

\item \emph{Two-deletion compatibility scoring}
To select among multiple feasible realizations, the evolved algorithm uses a "compatibility" score for each candidate edge $(u,v)$ computed from two-deletion subcards:
\[
G-u-v
\]
viewed from both the $u$-deleted and $v$-deleted sides.
Operationally it builds multisets of signatures of $(u,v)$-double-deleted cards grouped by local types, and compares intersections (a Jaccard/F1-like overlap) under the hypotheses "edge" vs.\ "non-edge." The resulting confidence scores become costs in the min-cost flow.
Mathematically, this is an \emph{empirical estimator} for which adjacency hypothesis makes the induced multiset of two-deletion cards most consistent across the deck.
\end{enumerate}
We state a conjecture intentionally aligned with what the harness actually tested (2-connected, non-regular, moderately dense bipartite graphs).

\begin{conjecture}[Generic bipartite reconstruction by low-order type constraints]\label{conj:bipartite}
Let $G$ be a bipartite graph with parts $(U,V)$, $|U|=u$, $|V|=v$.
Assume $G$ is 2-connected, not regular, and has minimum degree at least $6$.
Then, for generic such $G$ (e.g.\ under moderate-density random models conditioned on these constraints), the deck $\mathcal{D}(G)$ determines $G$ uniquely up to bipartite isomorphism, and $G$ can be reconstructed in polynomial time from $\mathcal{D}(G)$ by recovering degrees and neighbor-degree profiles from the deck, forming types $\tau=(\deg,P)$ and type-count constraints $R(\cdot,\cdot)$, and selecting the unique feasible adjacency by maximizing consistency of two-deletion cards (equivalently, minimizing a flow cost derived from two-deletion compatibility).
\end{conjecture}

\begin{remark}
This conjecture is \emph{not} the full reconstruction conjecture for bipartite graphs. It is an experimentally suggested \emph{mechanism}: that in high-degree, 2-connected, non-regular bipartite graphs, low-order invariants already pin down the graph with high probability, and the remaining ambiguity is resolved by two-deletion consistency.
\end{remark}

\subsubsection{A conditional correctness statement for the bipartite algorithm}
The evolved bipartite method will be correct under sufficiently strong uniqueness conditions, namely that types $\tau=(\deg,P)$ separate vertices well enough that the type-count constraints $R(\cdot,\cdot)$ yield a small feasible set of realizations, and that two-deletion signatures separate the remaining ambiguity by a strict objective gap.
One can formalize this as: if the induced transportation constraints admit a unique feasible adjacency (up to permutations within identical types), then the algorithm reconstructs it. Proving such uniqueness for wide graph families is nontrivial but may be approachable for random bipartite graphs (via concentration and collision bounds for degree/profile signatures).

\subsection{Planar graph reconstruction}

The planar experiments targeted graphs $G$ that are planar, connected and 2-connected (biconnected), have minimum degree $\delta(G)\ge 3$, and are \emph{not} maximal planar (so $m < 3n-6$ for $n\ge 3$).
For background on reconstruction and recognizability questions in planar regimes, see \cite{BilinskiKwonYu2007,BondyHemminger1977}.

\subsubsection{The evaluation harness}
Early planar harnesses compared cards by Weisfeiler-Lehman hashing, which is isomorphism-invariant but not complete (hash collisions possible).
This creates the risk that an evolved algorithm merely learns the evaluator's equivalence relation.
Accordingly, the harness was hardened by scrambling each card by an independent random vertex permutation, by comparing reconstructed decks to target decks via \emph{exact} isomorphism or canonical-labeling equality, and by evaluating each underlying graph under multiple independent scrambles and taking the worst-case score.

The final planar harness constructed a bucketed test set (roughly 40\% generic and 60\% targeted hard buckets), plus a persistent buffer of any failed instances for hard-negative mining.
Graphs were generated by first building an Apollonian triangulation (a maximal planar graph) via iterative face subdivision and then deleting edges while maintaining planarity, biconnectedness, and $\delta\ge 3$, with an explicit enforcement of non-maximality. The harness sampled across sizes $n\in\{12,14,16,18,20,22,24,26\}$ and evaluated each instance under $S=5$ independent scrambles, taking worst-case performance.

\subsubsection{The scoring system}
The planar reconstruction harness follows the same mathematical template as in \S\ref{sec:partIintro}, but the need for speed forced an explicit staging of the evaluator.

For a target planar graph $G$ on $n$ vertices and a proposed reconstruction $\widehat{G}$, the fundamental quantity is the deck-agreement score
\[
s_{\mathrm{deck}}(\widehat{G})\in[0,1],
\]
which measures how well the multiset $\mathcal{D}(\widehat{G})$ matches the target deck $\mathcal{D}(G)$.
In early experiments, the harness compared cards using fast isomorphism-invariant surrogates (such as Weisfeiler-Lehman hashes), which makes $s_{\mathrm{deck}}$ cheap but not logically complete.
In later experiments, once evaluator-exploitation became a realistic failure mode, the harness was hardened by switching to exact deck equality up to isomorphism, at least on the smaller buckets of the test set.
Throughout, robustness to labeling was enforced by evaluating several independent scrambles of the same underlying instance and taking the worst-case score across scrambles.

As in the bipartite setting, we supplemented $s_{\mathrm{deck}}$ with cheaper global diagnostics (degree data, edge count, and small subgraph counts) to avoid a completely flat score landscape before any perfect reconstructions appear.
These auxiliary terms were kept subordinate to the deck term so that improving the score could not systematically move a candidate away from true reconstruction.
Finally, the bucketed test-set design (see below) and hard-negative mining ensured that improvements reflected genuine generalization across structurally distinct planar graphs rather than specialization to a single easy regime.

\subsubsection{Results}
A striking outcome was that AlphaEvolve quickly found \emph{very simple} exact-verification algorithms that achieved perfect reconstruction on the hardened test set.
A representative evolved method proceeds by using Kelly-style counting to reconstruct $m=|E(G)|$ and then computing degrees from the cards via $\deg(v)=m-|E(G-v)|$. Let $\delta=\min_v \deg(v)$ and choose a card $G-v^\star$ with $\deg(v^\star)=\delta$. It then enumerates candidate neighbor sets $S\subseteq V(G-v^\star)$ of size $\delta$ such that adding $v^\star$ adjacent to $S$ yields the correct global degree multiset. Add an additional filter using triangle counts, reconstructible via
\[
T(G)=\frac{1}{n-3}\sum_{v} T(G-v),
\]
and noting that new triangles created by adding $v^\star$ equal the number of edges induced by $S$. For each candidate, build $\widehat{G}$ and accept it if $\mathcal{D}(\widehat{G})=\mathcal{D}(G)$ under exact deck equality.

The key structural reason this is fast for planar graphs is Euler's bound: every planar graph has a vertex of degree at most $5$, so $\delta\le 5$ in the connected planar regime. Hence the neighborhood enumeration step is $O(n^5)$ in the worst case and is typically far smaller in practice.

With the hardened planar harness, the user reports perfect reconstruction in about an hour of evolution:
\[
\text{successful reconstructions} = 896/896,\quad
\text{worst-of-5-scrambles deck score}=1.00\text{ globally}.
\]
In a stronger, "tricky planar set" variant, AlphaEvolve again found a perfect exact-verification algorithm in minutes.
A conjecture suggested by the planar experiments is the following. 

\begin{conjecture}[Biconnected planar $\delta\ge 3$ non-maximal reconstruction by bounded-degree search]\label{conj:planar}
Let $G$ be a planar biconnected graph with $\delta(G)\ge 3$ that is not maximal planar.
Then $G$ is reconstructible from its deck.
Moreover, there exists a reconstruction algorithm that recovers $m$ and the degree multiset from the deck, selects a minimum-degree vertex $v$ (with $\deg(v)\le 5$ by planarity), identifies a small set of candidate neighborhoods for $v$ in some card $G-v$ using degree and triangle constraints, and verifies the correct candidate by exact deck equality, running in time polynomial in $n$ (with dependence dominated by $O(n^5)$ neighborhood search and deck isomorphism checks).
\end{conjecture}

\begin{remark}
The literature on reconstruction in planar regimes is extensive, and the experiments here should be read as computational probes rather than as substitutes for structural theory.
For discussion of reconstructibility and recognizability questions within planar graph classes, see \cite{BilinskiKwonYu2007} and the general survey \cite{BondyHemminger1977}.
\end{remark}

\subsubsection{A bounded-degree reconstruction route for planar graphs}
For planar graphs, Conjecture~\ref{conj:planar} is plausible because there always exists a vertex $v$ with $\deg(v)\le 5$, the degree multiset constraints drastically restrict candidate neighborhoods of $v$ in $G-v$, triangle counts and higher small-subgraph counts add further restrictions, and exact deck equality provides a definitive verification step.

A proof strategy would require showing that for every $G$ in the target class, there exists at least one vertex $v$ such that its neighborhood in some card is uniquely identifiable by reconstructible invariants of bounded order, or at least that the number of consistent candidates is always bounded by a polynomial so that exact verification yields a polynomial-time reconstruction procedure.

\section{The Alon-Tarsi conjecture}
\label{sec:partIIintro}

The Alon-Tarsi conjecture asserts that for even order $n$ the signed enumeration of Latin squares is nonzero, equivalently that $E_n-O_n\neq 0$ where $E_n$ and $O_n$ denote the numbers of even and odd Latin squares under the standard Alon-Tarsi sign convention.
A classical approach to such statements is to construct a sign-reversing involution on most Latin squares, leaving a residual set whose signed sum can be analyzed.
Our experiments follow this philosophy: the evolvable code block specifies a deterministic map $F:\mathcal{L}_n\to\mathcal{L}_n$, and the harness scores $F$ by how well it behaves like a canonical sign-reversing involution.

Across all experiments, validity of Latin squares and involutions are treated as hard constraints, checked by the harness.
Given a test set $\mathcal{T}_n\subseteq\mathcal{L}_n$, the harness checks that $F(L)\in\mathcal{L}_n$ for every $L\in\mathcal{T}_n$ and that $F(F(L))=L$; candidates failing these conditions receive negligible fitness.
Among valid involutions, the primary objective is the sign-flip rate
\[
p_{\mathrm{flip}}(F):=\frac{1}{|\mathcal{T}_n|}\sum_{L\in\mathcal{T}_n}\mathbf{1}\bigl[\sgn(F(L))=-\sgn(L)\bigr],
\]
together with the non-identity rate (to discourage the trivial fixed-point map).
To align the search with the involution philosophy, the harness also tracks the \emph{residual set}
\[
\Res(F):=\{L\in\mathcal{T}_n:\ \sgn(F(L))=\sgn(L)\}\ \cup\ \{L\in\mathcal{T}_n:\ F \text{ fails a hard constraint on }L\},
\]
and computes a residual bias statistic $b(F)=\frac{1}{|\Res(F)|}\sum_{L\in\Res(F)}\sgn(L)$ (reported as $b(F)^2$).
A residual bias close to $1$ indicates that the unpaired objects are overwhelmingly of a single sign, which is precisely the signature one would hope to prove in a residual-set approach.

Although it is conceptually cleaner to think of $(p_{\mathrm{flip}},q,b^2,\dots)$ as a multi-criterion report, AlphaEvolve ultimately needs a scalar value to decide whether a code modification is better.
In practice the harness uses a tiered (effectively lexicographic) scalarization: hard-constraint violations are assigned a very low score, so that any candidate that is not a well-defined involution on the test set is immediately dominated.
Among valid involutions, the dominant contribution comes from $p_{\mathrm{flip}}$, with secondary terms (nontriviality, residual bias, locality, and symmetry robustness, depending on the experiment) used only for tie-breaking and for gently shaping the landscape.
This design choice is important for mathematical interpretability: improving the score can be read as improving a specific quantitative feature of an otherwise well-formed combinatorial map.

Experiment~2 adds an explicit locality term in order to prefer \emph{proof-shaped} involutions built from small trades.
Writing $\mathrm{supp}(F;L)$ for the number of cells modified by $F$ on $L$, the harness defines a quality score
\[
q(F):=\frac{1}{|\mathcal{T}_n^{\mathrm{succ}}|}\sum_{L\in\mathcal{T}_n^{\mathrm{succ}}}\exp \Bigl(-\frac{\mathrm{supp}(F;L)}{2n}\Bigr),
\]
where $\mathcal{T}_n^{\mathrm{succ}}$ denotes the subset on which $F$ is valid, involutive, and sign-flipping.

Experiment 3 adds a decisive anti-overfitting stress test: before applying $F$, each input $L$ is conjugated by a deterministic pseudorandom isotopy (permutations of rows, columns, and symbols).
For even $n$ the Alon-Tarsi sign is invariant under isotopy, so this preserves the true value of the sign-flip condition while destroying any reliance on fixed coordinates.
In effect, the scoring protocol forces AlphaEvolve to discover selection rules that are canonical under the full isotopy group.

\subsection{Experiment 1: evolving a nontrivial sign-flipping involution with biased residual set}
Experiment~1 evaluated candidate maps $F$ on precomputed test sets for orders $n\in\{8,10,12,14,26\}$ (with runtime sanitization to reject malformed outputs). AlphaEvolve found a near-perfect sign-flipping involution: the residual bias squared equaled $1$ in all tested orders, meaning the residual set (though small) was \emph{completely one-sided in sign} on the test data.

\subsubsection{Results and analysis}
The evolved code constructs many candidate \emph{two-line trades} (on row pairs and column pairs) and applies the first trade that is \emph{stable} under a canonical-order check.

The core primitive is a two-line cycle trade. It proceeds by choosing a pair of rows (or columns), forming the induced permutation on indices obtained by matching equal symbols between the two lines, decomposing this permutation into cycles, and then swapping the two lines exactly on the positions belonging to a selected cycle (or union of cycles). An odd cycle yields a sign-flipping trade, whereas an even cycle yields a sign-preserving trade.

The algorithm also introduced a rare sign-preserving "sabotage" option (even-cycle trades accepted by a deterministic hash filter), and a sign-preserving fallback (symbol swap $0\leftrightarrow 1$ for certain $n$, or a row-symbol conjugation for others).

The reported metrics were:

\begin{table}[h]
\centering
\begin{tabular}{@{}lccccc@{}}
\toprule
 & $n=8$ & $n=10$ & $n=12$ & $n=14$ & $n=26$ \\
\midrule
Validity rate & 1.00 & 1.00 & 1.00 & 1.00 & 1.00 \\
Involution rate & 1.00 & 1.00 & 1.00 & 1.00 & 1.00 \\
Sign-flip rate & 0.98 & 0.90 & 0.89 & 0.89 & 0.75 \\
Residual bias$^2$ & 1.00 & 1.00 & 1.00 & 1.00 & 1.00 \\
\bottomrule
\end{tabular}
\caption{Experiment 1: summary of the reported per-$n$ metrics.}
\end{table}

Additionally, the harness printed residual samples showing $\lvert\Res\rvert$ extremely small (1 or 2) and \emph{entirely} even for $n=8,10,14,26$ and entirely odd for $n=12$ on the test sets.

\[
|\Res_8|=1,\quad |\Res_{10}|=1,\quad |\Res_{12}|=2,\quad |\Res_{14}|=1,\quad |\Res_{26}|=1,
\]
and moreover all residual squares had the same sign within each order:
$\sgn(L)=+1$ for $L\in\Res_8\cup\Res_{10}\cup\Res_{14}\cup\Res_{26}$, while $\sgn(L)=-1$ for all $L\in\Res_{12}$.

On the tested data, the evolved map behaves like a sign-reversing involution on almost all inputs; the failures form a tiny residual set, but with maximal sign bias.
This is exactly the involution philosophy: a perfect sign-reversing involution would force $E_n-O_n=0$, while a \emph{partial} sign-reversing involution leaves a residual sum equal to $E_n-O_n$ restricted to the residual.
Experiment~1 suggests that a carefully chosen \emph{canonical cycle trade} can pair most squares, while the leftover squares may be forced into a single sign class.

\subsubsection{Mathematical reformulation of the evolved algorithm}

The evolved map searches for a two-line Latin trade supported on cycles of a naturally associated permutation, and applies the first ``stable'' trade found in a fixed scan order.  We describe the row version; the column version is obtained by transposing the roles of rows and columns.

\begin{definition}[Row-matching permutation]
Let $L$ be a Latin square of order $n$, and fix two distinct rows $r_1\neq r_2$.
Define a permutation $\pi=\pi_{r_1,r_2}\in S_n$ of the column set $\{0,1,\dots,n-1\}$ by the rule:
for each column $c$, the value $\pi(c)$ is the unique column $c'$ such that
\[
L_{r_2,c'}=L_{r_1,c}.
\]
Equivalently, $\pi$ sends $c$ to the location in row $r_2$ of the symbol appearing in row $r_1$ at column $c$.
\end{definition}

\begin{definition}[Cycle trade on a row pair]
Let $C\subseteq\{0,1,\dots,n-1\}$ be a union of cycles of $\pi_{r_1,r_2}$.
Define $T_{r_1,r_2,C}(L)$ to be the array obtained by swapping the entries of rows $r_1$ and $r_2$ in exactly the columns in $C$, leaving all other entries unchanged.
\end{definition}

\begin{lemma}[Validity and involution]
If $C$ is a union of cycles of $\pi_{r_1,r_2}$, then $T_{r_1,r_2,C}(L)$ is again a Latin square.
Moreover $T_{r_1,r_2,C}$ is an involution:
\[
T_{r_1,r_2,C}\bigl(T_{r_1,r_2,C}(L)\bigr)=L.
\]
\end{lemma}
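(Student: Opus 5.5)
The plan is to verify the Latin property row by row and column by column, then observe that the cycle structure used to define the trade is itself preserved, which gives the involution property.

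\emph{Rows.} Only rows $r_1$ and $r_2$ change, so only these need checking. The key observation is that $C$ is $\pi$-invariant, i.e.\ $\pi(C)=C$, since it is a union of cycles. Hence the multiset of symbols $\{L_{r_1,c}:c\in C\}$ coincides with $\{L_{r_2,c'}:c'\in C\}$, because $L_{r_1,c}=L_{r_2,\pi(c)}$ and $\pi$ restricts to a bijection of $C$. After the swap, row $r_1$ carries the symbols of row $r_2$ on $C$ and its own symbols off $C$. Since the two blocks of symbols on $C$ agree as sets, the new row $r_1$ has the same symbol set as the old one, so it is still a permutation of $[n]$. The same argument applies to row $r_2$.

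\emph{Columns.} For a column $c\notin C$ nothing changes. For $c\in C$, the column only has the entries in positions $r_1$ and $r_2$ exchanged. This is a transposition of entries within the column, so the column remains a permutation of $[n]$.

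\emph{Involution.} Let $L'=T_{r_1,r_2,C}(L)$. The step that needs care is checking that $C$ is still a union of cycles of the row-matching permutation $\pi'$ of $L'$, so that $T_{r_1,r_2,C}$ is a legitimate trade on $L'$. (If the lemma is read purely as a statement about the swap map for a fixed $C$, this is automatic, since swapping the same cells twice restores them.) To check it, take $c\in C$. Then $L'_{r_1,c}=L_{r_2,c}=L_{r_1,\pi^{-1}(c)}=L'_{r_2,\pi^{-1}(c)}$, using $\pi^{-1}(c)\in C$. So $\pi'(c)=\pi^{-1}(c)$ on $C$, while $\pi'=\pi$ off $C$. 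It follows that $C$ is again $\pi'$-invariant and is a union of cycles of $\pi'$, with the same cycles traversed in reverse. Applying $T_{r_1,r_2,C}$ to $L'$ swaps the same cells back, which restores $L$.
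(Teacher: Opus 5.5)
Your proof is correct and complete. The $\pi$-invariance of $C$ gives the row condition, and the column condition is immediate. Your computation that the row-matching permutation $\pi'$ of $L'=T_{r_1,r_2,C}(L)$ equals $\pi^{-1}$ on $C$ and $\pi$ off $C$ shows that $C$ is again a union of cycles for $L'$. So the trade can legitimately be reapplied to $L'$, and it undoes itself.

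The paper states this lemma without proof; it moves straight on to the sign lemma. There is therefore no argument of the paper's to compare against, and yours supplies the missing verification.

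Two of your intermediate facts are worth keeping explicit, because the paper relies on them tacitly. First, the identities $L_{r_2,c}=L_{r_1,\pi^{-1}(c)}$ for $c\in C$, and hence $\{L_{r_1,c}:c\in C\}=\{L_{r_2,c}:c\in C\}$, underlie a claim in the proof of the next lemma. That proof asserts that the trade permutes the symbols of row $r_1$ on a single cycle $C$ by an $\ell$-cycle. Second, the fact that $\pi'|_C=\pi^{-1}|_C$ (the same cycles, traversed backwards) is exactly why Experiment~3 uses a cycle representative stabilized under reversal. Without that stabilization, the same trade might not be selected again on $F(L)$.
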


\begin{lemma}[Effect on the Alon-Tarsi sign for a single cycle]
Assume $C$ is a single cycle of $\pi_{r_1,r_2}$ of length $\ell$.
Then
\[
\sgn\bigl(T_{r_1,r_2,C}(L)\bigr)=(-1)^{\ell}\,\sgn(L).
\]
In particular, an odd cycle produces a sign flip, while an even cycle preserves the sign.
\end{lemma}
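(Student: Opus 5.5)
The plan is to compare, factor by factor, the row permutations $\rho_r$ and column permutations $\kappa_c$ of $L' := T_{r_1,r_2,C}(L)$ with those of $L$. Only rows $r_1,r_2$ and the columns in $C$ change. The previous lemma guarantees that $L'$ is a Latin square, so $\sgn(L')$ is defined.

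\emph{Step 1: express $\pi$ through the row permutations.} By the definition of the row-matching permutation, $L_{r_2,\pi(c)}=L_{r_1,c}$ for every column $c$. In one-line notation this reads $\rho_{r_2}\circ\pi=\rho_{r_1}$, i.e.\ $\rho_{r_2}=\rho_{r_1}\circ\pi^{-1}$.

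\emph{Step 2: the two modified rows.} Row $r_1$ of $L'$ has entry $\rho_{r_2}(c)=\rho_{r_1}(\pi^{-1}(c))$ for $c\in C$, and $\rho_{r_1}(c)$ otherwise. Hence $\rho'_{r_1}=\rho_{r_1}\circ\sigma$, where $\sigma$ agrees with $\pi^{-1}$ on $C$ and is the identity off $C$. This $\sigma$ is well defined because $C$ is $\pi$-invariant. Since $C$ is a single cycle of $\pi$, $\sigma$ is an $\ell$-cycle, so $\sgn(\sigma)=(-1)^{\ell-1}$.

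Symmetrically, $\rho'_{r_2}=\rho_{r_2}\circ\tau$, where $\tau$ is $\pi$ restricted to $C$ and extended by the identity. This follows from $\rho_{r_1}=\rho_{r_2}\circ\pi$. Again $\sgn(\tau)=(-1)^{\ell-1}$.

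The product of all row signs therefore changes by $(-1)^{2(\ell-1)}=+1$. The row contribution is unchanged.

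\emph{Step 3: the modified columns.} For each $c\in C$, column $c$ of $L'$ is column $c$ of $L$ with the entries in positions $r_1$ and $r_2$ interchanged. Thus $\kappa'_c=\kappa_c\circ(r_1\,r_2)$, and each such column sign flips. Columns outside $C$ are untouched. Since $|C|=\ell$, the column product changes by $(-1)^\ell$.

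Combining Steps 2 and 3 gives $\sgn(L')=(-1)^\ell\sgn(L)$.

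The only delicate point is Step 2: one must check that the induced permutations $\sigma$ and $\tau$ are genuinely $\ell$-cycles. This relies on $C$ being exactly one cycle of $\pi$. The direction of composition does not matter, since $\pi$ and $\pi^{-1}$ have the same sign.

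Fixed points cannot arise: $\pi(c)=c$ would mean $L_{r_1,c}=L_{r_2,c}$, contradicting the column condition. So $\ell\ge 2$, although the computation holds for any $\ell$.

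The same argument yields the column version by transposing the roles of rows and columns. For a union of cycles, the factors multiply, giving $(-1)^{|C|}$.
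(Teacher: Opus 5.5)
Your proof is correct and follows the same route as the paper. The $\ell$ affected columns each pick up the transposition $(r_1\,r_2)$, giving $(-1)^{\ell}$, while rows $r_1$ and $r_2$ are each composed with an $\ell$-cycle whose signs cancel; your factorizations $\rho'_{r_1}=\rho_{r_1}\circ\sigma$ and $\rho'_{r_2}=\rho_{r_2}\circ\tau$ simply make precise the paper's informal claim that the trade permutes the symbols on $C$ by an $\ell$-cycle.
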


\begin{proof}
In each affected column $c\in C$, the trade swaps the two entries in that column lying in rows $r_1$ and $r_2$; this is a transposition in the column permutation and hence flips the sign of that column.  Thus the product of the column signs acquires the factor $(-1)^{|C|}=(-1)^{\ell}$.

On the other hand, restricting to a single $\ell$-cycle of $\pi_{r_1,r_2}$, the operation replaces the $r_1$-row entries on $C$ by the corresponding $r_2$-row entries, which permutes the symbols on those $\ell$ positions by an $\ell$-cycle; this changes the sign of row $r_1$ by $(-1)^{\ell-1}$, and similarly changes the sign of row $r_2$ by $(-1)^{\ell-1}$.  The product of row signs therefore acquires the factor $(-1)^{2(\ell-1)}=+1$.  Multiplying row and column contributions gives the stated formula.
\end{proof}

The evolved map fixes a finite deterministic sequence $\mathcal{S}$ of candidate two-line trades.
Each element of $\mathcal{S}$ specifies (i) whether to work in row mode or column mode, (ii) a pair of line indices, and (iii) a selection rule for the support $C$:
either the \emph{flip} rule (choose the lexicographically first odd cycle of $\pi$, and optionally adjoin the first even cycle), or the \emph{sabotage} rule (choose the first even cycle, but accept it only under a deterministic rarity filter, producing sign-preserving moves on a small subfamily).
In the implementation, $\mathcal{S}$ is generated using a pseudorandom number generator with fixed seed, hence is deterministic.

Given $L$, the algorithm scans $\mathcal{S}$ in order and, whenever the specified rule produces a valid cycle trade, forms the tentative output $L'=T(L)$.
It accepts $L'$ only if it is \emph{stable} with respect to the scan order, meaning that \emph{no earlier} candidate trade from $\mathcal{S}$ becomes available when tested on the tentative output $L'$.
If a stable trade is found, it is returned.

If no stable trade is found, the algorithm applies a fixed fallback involution depending on $n$:
when $n/2$ is odd it swaps the symbols $0\leftrightarrow 1$ (a sign-preserving involution for even $n>1$), while when $n/2$ is even it applies row-symbol conjugation (the $S_3$-conjugate obtained by exchanging the column and symbol coordinates).

\begin{conjecture}[Residual-bias involution principle suggested by Experiment~1]
For each even $n$, there exists a deterministic scan order $\mathcal{S}_n$ of two-line cycle trades together with a deterministic stability rule of the above no earlier trade becomes available type, such that the induced map $F_n:\mathcal{L}_n\to\mathcal{L}_n$ is an involution and satisfies
\[
\sgn(F_n(L))=-\sgn(L)\quad\text{for all }L\in\mathcal{L}_n\setminus\Res_n,
\]
where the residual set $\Res_n$ is nonempty and has nonzero signed sum, for instance $\sum_{L\in\Res_n}\sgn(L)=\pm|\Res_n|$.
In particular, $\sum_{L\in\mathcal{L}_n}\sgn(L)\neq 0$, which would imply the Alon-Tarsi conjecture at order $n$.
\end{conjecture}

\subsection{Experiment 2.}
Experiment~2 strengthened the evaluation protocol in two directions: it increased the size and adversarial nature of the test data (100 examples per fro each $n$ for $n\in\{8,10,12,14,26\}$), and it refined the score to prefer local, analyzable trades.
\subsubsection{Harness summary}
A trial on an input $L$ is counted as a \emph{success} only if the candidate map is valid on $L$, involutive on $L$ (including non-identity), and flips sign on $L$.
Beyond the binary success indicator, the harness also computes a \emph{quality} score that rewards locality.
Writing $\mathrm{support}(F;L)$ for the Hamming support size (the number of cells changed by the map on $L$), the per-instance quality is
\[
q(L)=\exp\!\Bigl(-\frac{\mathrm{support}(F;L)}{2n}\Bigr),
\]
and the reported quality is the average of $q(L)$ over successful instances.
The choice of an exponential decay makes the score sensitive to multiplicative improvements in support size and naturally keeps $q(L)\in(0,1]$, while the $2n$ scaling reflects the fact that the simplest two-line trades have supports proportional to $n$.
In effect, the fitness function promotes "proof-shaped" involutions that act by small canonical trades rather than by large global rearrangements.

\subsubsection{Results and analysis}
The evolved algorithm generalizes the cycle-trade idea in three ways. It searches not only row-pairs but also column-pairs and \emph{symbol-pairs} (a conjugate view in which one swaps symbols in selected columns by looking at where symbols occur). It tries several deterministic pairing families (adjacent pairs, half-shifted pairs, pairs from a 1-factorization, and exhaustive all-pairs). It chooses the \emph{smallest} odd cycle available among the pairs considered at a given stage, to bias toward local modifications.

Reported metrics:

\begin{table}[h]
\centering
\begin{tabular}{@{}lccccc@{}}
\toprule
 & $n=8$ & $n=10$ & $n=12$ & $n=14$ & $n=26$ \\
\midrule
Validity rate & 1.00 & 1.00 & 1.00 & 1.00 & 1.00 \\
Success rate  & 0.85 & 0.98 & 0.98 & 0.97 & 1.00 \\
Quality (avg) & 0.69 & 0.73 & 0.77 & 0.80 & 0.89 \\
\bottomrule
\end{tabular}
\caption{Experiment 2: reported success and locality (quality) by order.}
\end{table}

The near-perfect success for $n\ge 10$ and perfect success for $n=26$ on this dataset strongly suggests that
odd-cycle trades are ubiquitous once one searches across conjugate views and pairing families.
The dip at $n=8$ indicates either (i) genuine absence of an odd-cycle trade in the searched families for some inputs, or
(ii) a determinism/involution interaction where applying a trade creates an earlier trade and breaks $F(F(L))=L$ in a minority of cases.
The quality score increasing with $n$ is consistent with the existence of relatively small supports (on a $2n$ scale) among successful trades.

\subsubsection{Mathematical reformulation of the evolved algorithm}
Let $L$ be a Latin square of order $n$.  The algorithm operates in three conjugate line families:
\begin{itemize}
\item \emph{Row view.} Lines are rows $r\in[n]$; a trade swaps two rows on a selected set of columns.
\item \emph{Column view.} Lines are columns $c\in[n]$; a trade swaps two columns on a selected set of rows.
\item \emph{Symbol view.} Lines are symbols $s\in[n]$; in each column $c$, each symbol occurs in a unique row.  Swapping symbols $s_1,s_2$ in a chosen set of columns means swapping their positions within those columns.
\end{itemize}

In each view, for a chosen pair of lines $(\ell_1,\ell_2)$ one defines the induced permutation on the complementary index set (columns for the row and symbol views; rows for the column view), decomposes it into cycles, and observes that any \emph{odd} cycle yields a sign-flipping two-line trade. The algorithm found by AlphaEvolve can be describes as follows. 

Fix an order of conjugate types (rows, columns, symbols) and an order of pairing strategies (adjacent, half-shifted, 1-factorization, exhaustive).
For each conjugate type in that order, and for each pairing strategy in that order:
enumerate candidate pairs $(\ell_1,\ell_2)$ in the strategy's canonical order; for each pair, compute the induced permutation and list its odd cycles.
Among all odd cycles found, choose one minimizing cycle length, breaking ties deterministically by the pair's ordinal position and then by lexicographic cycle data.
Apply the corresponding cycle trade and return.
If no odd cycle is found in any view, apply a fixed fallback involution (in the evolved code: swap rows $0$ and $1$).

Experiment~2 suggests the following conjecture.
\begin{conjecture}
Fix even $n$.  Every Latin square $L\in\mathcal{L}_n$ admits an odd-cycle two-line trade in at least one of the three conjugate views (row, column, symbol).  Moreover, such a trade can always be found among pairs belonging to a fixed canonical 1-factorization of the $n$ lines in that view.
\end{conjecture}

A proof of this conjecture, together with a stability/canonicalization argument ensuring involutivity under repeated application, would provide a direct path to a sign-reversing involution on a large subset of $\mathcal{L}_n$.

\subsection{Experiment 3: a stabilized deterministic sign-flipping involution}
Experiment~3 introduced a decisive stress test: before the evolved map $F$ was applied, each input Latin square $L$ was transformed by a deterministic but pseudorandom \emph{isotopy}:
a permutation of rows, columns, and symbols. For even $n$, the Alon-Tarsi sign is invariant under isotopy, so this does not change the truth of $\sgn(F(L))=-\sgn(L)$ but prevents any method that relies on memorizing specific instances or rigid coordinate positions.

The reported metrics were the validity rate, the involution rate, the sign-flip rate, and the non-identity rate.

\subsubsection{Results and analysis}
The evolved algorithm is substantially more proof-shaped than the earlier ones. It combines a \emph{tiered search} for trades (starting with disjoint adjacent row pairs, then adjacent column pairs, then a global exhaustive search), with a robust \emph{canonical cycle representative} that is stabilized under reversal and rotation so that the same cycle is selected after applying the trade. It also enforces a \emph{stability predicate} rejecting trades whose application would create the selection rule changes on the second application failure mode, and it adds a further \emph{canonical isotopy layer}: if no stable trade exists in the current coordinates, the algorithm applies a deterministic pseudorandom isotopy and searches again, but only when no simpler tier would have worked on the resulting square. This is an explicit realization of the paradigm
\[
F(L)=P^{-1}\circ T\circ P(L),
\]
with $P$ chosen canonically and with stability checks engineered so that the same $P$ is chosen for $L$ and $F(L)$.

Reported metrics:

\begin{table}[h]
\centering
\begin{tabular}{@{}lccccc@{}}
\toprule
 & $n=8$ & $n=10$ & $n=12$ & $n=14$ & $n=26$ \\
\midrule
Validity rate & 1.00 & 1.00 & 1.00 & 1.00 & 1.00 \\
Involution rate & 1.00 & 1.00 & 1.00 & 1.00 & 1.00 \\
Non-identity rate & 1.00 & 1.00 & 1.00 & 1.00 & 1.00 \\
Flip rate & 0.98 & 1.00 & 1.00 & 1.00 & 1.00 \\
\bottomrule
\end{tabular}
\caption{Experiment 3: reported per-$n$ metrics under isotopy randomization.}
\end{table}

The evolved map is always valid, always an involution, always nontrivial, and flips sign essentially always on the dataset (perfect for $n\ge 10$ and $98\%$ for $n=8$).

This is a good evidence that the stabilized odd-cycle trade and canonical isotopy framework is close to a genuine combinatorial involution.

\subsubsection{Mathematical reformulation of the evolved algorithm}
We extract the mathematical skeleton.

\paragraph{Step A: stable trade selector $T(L)$.}
Define $T(L)$ to be the result of applying the first available odd-cycle trade in the priority list:
One first searches adjacent disjoint row pairs $(0,1),(2,3),\dots$ for the lexicographically smallest stabilized odd cycle. If none is available, one searches adjacent disjoint column pairs $(0,1),(2,3),\dots$ with the further restriction that the chosen trade does \emph{not} create a tier-1 row-pair trade after application. Otherwise, among \emph{all} row-pair and column-pair odd-cycle trades, one selects the minimal trade key (cycle length, stabilized cycle data, indices, and mode) but accepts it only if (i) the output is \emph{stable} (admits the same trade on reapplication) and (ii) no strictly smaller trade key would become stable after application.

If no such stable trade exists, declare $T(L)$ undefined.

\paragraph{Step B: canonical isotopy layer.}
Let $\mathcal{P}_1$ be the set of row transpositions $(r_1r_2)$ in lexicographic order,
and $\mathcal{P}_2$ the set of column transpositions $(c_1c_2)$ in lexicographic order.
Define $F(L)$ by:
We define $F(L)$ by a deterministic cascade. If $T(L)$ is defined, set $F(L)=T(L)$. Else, find the first $P\in\mathcal{P}_1$ such that $T(P(L))$ is defined; let $L':=P^{-1}(T(P(L)))$ and accept it only if $T(L')$ is undefined (no simpler tier-0 solution). If accepted, set $F(L)=L'$. Else, find the first $P\in\mathcal{P}_2$ such that $T(P(L))$ is defined; let $L':=P^{-1}(T(P(L)))$ and accept it only if neither $T(L')$ nor any row-transposition tier yields a solution.
If accepted, set $F(L)=L'$. Else, apply a fixed fallback involution (the evolved code uses a symbol transposition).

By construction, each accepted move is an involutive trade conjugated by an involutive isotopy, so it is involutive \emph{provided} the selection is stable under reapplication. The extensive stability checks are precisely what enforces this.

A conjecture suggested by Experiment 3 is the following.
\begin{conjecture}[Stable odd-cycle trade conjecture with bounded isotopy depth]
Fix even $n$. For every Latin square $L\in\Ln$, there exists a stable odd-cycle two-line trade in $L$ or in a conjugate $P(L)$ where $P$ is a single transposition of two rows or two columns, such that the stabilized canonical selection rule chooses the same $(P,\text{trade})$ for $L$ and for $F(L)$.
Consequently, the resulting map $F:\Ln\to\Ln$ is a sign-reversing involution on all of $\Ln$ except possibly a residual set $\Res_n$ handled by a final canonical rule; moreover $\sum_{L\in\Res_n}\sgn(L)\neq 0$.
\end{conjecture}

This conjecture is a concrete, algorithmically precise strengthening of the residual-bias involution idea.
If established, it would yield a constructive proof strategy for the Alon-Tarsi conjecture.

\section{Rota's Basis Conjecture}
\label{sec:partIIIintro}

Rota's Basis Conjecture (RBC) asks whether $n$ disjoint bases of a rank-$n$ matroid can always be rearranged into an $n\times n$ grid whose columns are also bases.
The experiments in this part restrict to linear matroids over $\mathbb{F}_2$ and aim to construct $n$ disjoint transversal bases by a sequence of local moves.
The evolvable block in these experiments is not a direct output construction but a \emph{policy} for a greedy local-exchange engine: at each step, given the current partially filled grid, the policy scores all legal insert and repair moves and the engine executes a move of maximal score.

The evaluation harness converts the qualitative goal "find a full transversal-basis grid" into a scalar fitness that both rewards success and shapes the intermediate dynamics.
For a rollout producing a trajectory $(A_t)_{t=0}^T$ of assignment matrices, the primary term is the success indicator $\mathbf{1}[\text{all columns are full bases at termination}]$.
To avoid sparse all-or-nothing feedback, the harness also tracks progress and damage: it rewards completing full columns, penalizes breaking a column that was already full, penalizes the use of explicit repair moves, and grants partial credit for the number of valid (independent) columns at the end of the rollout.
A small time penalty proportional to the number of steps discourages policies that solve instances only by wandering for long trajectories.
The concrete fitness scalar in the experiments below has the form displayed in Experiment~A, with fixed weights chosen so that success dominates all secondary terms.

Robustness is enforced by evaluating each instance under multiple randomized rollouts and aggregating by majority vote, and by sampling both generic and deliberately structured trap instances from circuit-rich pools.

\subsection{Common formalism.}

Fix $n\ge 2$.
We work in a linear matroid over $\mathbb{F}_2$ of rank $n$ with $n$ given bases.

\begin{definition}[State as an assignment matrix]
Let $B_1,\dots,B_n$ be bases, where
$B_i=\{v_{i,1},\dots,v_{i,n}\}$ (elements are distinguished by their row-index even if vectors coincide).
A \emph{state} is an $n\times n$ matrix $A=(A_{ij})$ with entries in $\{-1,1,\dots,n\}$ such that, for each row $i$, the non-$-1$ entries are all distinct.
The \emph{column set} is
\[
T_j(A)=\{v_{i,A_{ij}}: A_{ij}\ne -1\}.
\]
Column $j$ is \emph{valid} if $T_j(A)$ is independent, and \emph{full} if it is valid and $|T_j(A)|=n$.
\end{definition}

\begin{definition}[Deficit and duplicates]
For a column set $T\subseteq \mathbb{F}_2^n$, define its \emph{rank deficit}
\[
\mathrm{def}(T)=|T|-\mathrm{rank}(T)\in\{0,1,\dots\}.
\]
Define $\mathrm{dup}(T)$ as the number of repeated vector values in $T$ (parallel elements), i.e.\ the number of elements whose value has already appeared earlier in the list representation.
Both quantities are $0$ if $T$ is a partial independent set with distinct vectors.
\end{definition}

\begin{definition}[Moves]
Two move types are used.
\begin{itemize}
    \item \emph{Insert:} choose an empty slot $(i,j)$ with $A_{ij}=-1$ and an unused index $k$ in row $i$; set $A_{ij}\leftarrow k$.
    \item \emph{Repair (row-swap exchange):} choose a dependent \emph{source} column $j_s$, a \emph{target} column $j_t\ne j_s$, and a row $i$ with $A_{ij_s}\ne -1$; swap row-$i$ assignments between the two columns:
\[
(A_{ij_s},A_{ij_t}) \leftarrow (A_{ij_t},A_{ij_s}).
\]
\end{itemize}
In the experiments, repair moves are only proposed for rows $i$ corresponding to elements of a circuit returned by the oracle on $T_{j_s}(A)$.

\end{definition}

When a column is dependent, a circuit oracle returns a minimal dependent subset $C\subseteq T_j(A)$ and the \emph{circuit size} $|C|$.
In the fixed-rank experiments ($n=5,7$) the oracle brute-forces minimal dependent subsets among $\le n$ vectors; in the variable-rank benchmark, the oracle searches only for small circuits of size $2,3,4$ (for speed) and otherwise returns the entire dependent set, so the reported circuit size may be an upper bound rather than minimal.

A \emph{policy} is a scoring function $S$ mapping move features to a real score.
At each step, the engine enumerates legal moves, computes post-move features, evaluates $S$, and applies a move with maximum score (ties broken arbitrarily).
Some policies add small random noise to the score, making the algorithm a randomized greedy procedure.

\subsection{Experiment A: Rank 5 circuit-rich pool benchmark}
\label{sec:expA}

\subsubsection{Evaluation harness and instance distribution}

This benchmark sets $n=5$ and works over $\mathbb{F}_2$.
The instance distribution is intentionally \emph{circuit rich}: each basis is sampled from a small pool with many short dependencies.

A pool $P\subset \mathbb{F}_2^5\setminus\{0\}$ is generated from a basis $b_1,\dots,b_5$ by adding structured sums:
\[
P=\{b_i\}_{i=1}^5 \cup \{b_i+b_{i+1}\}_{i=1}^4 \cup \{b_i+b_{i+2}\}_{i=1}^3,
\]
deduplicated and padded if necessary to reach size $12$.

Two variants appear in the user's experiments:
\begin{enumerate}
    \item \emph{Fixed pool:} $b_i=e_i$ is the standard basis, producing a fixed 12-vector pool. 
    \item \emph{Randomized pool:} $b_i$ are columns of a random invertible $5\times 5$ matrix over $\mathbb{F}_2$, i.e.\ a random element of $\mathrm{GL}(5,2)$, producing a fresh pool per instance seed.
\end{enumerate}

A \emph{generic} instance samples each row basis $B_i$ as a random independent $5$-subset of $P$.
A \emph{structured trap} instance first samples a dependent 5-set $D\subset P$, then biases each row basis to contain a forced element from $D$ while maintaining row-independence.

The rollout fitness has the form
\[
\mathrm{fit}=
\mathbf{1}[\text{success}]
+\beta_{\mathrm{full}}
-\gamma_{\mathrm{break}}
-\delta_{\mathrm{repair}}
+\varepsilon\frac{\#\text{valid columns at end}}{n}
-\alpha\frac{\text{steps}}{\text{step limit}},
\]
with $(\alpha,\beta,\gamma,\delta,\varepsilon)=(0.10,0.06,0.12,0.02,0.03)$.

The individual terms were chosen so that the score reads naturally to a combinatorialist.
The indicator $\mathbf{1}[\text{success}]$ is the true objective: producing $n$ disjoint transversal bases.
The auxiliary count $c_{\mathrm{full}}$ rewards reaching a state in which many columns are already valid transversal bases, even if the final few columns still fail.
The counters $c_{\mathrm{break}}$ and $c_{\mathrm{repair}}$ penalize thrashing behaviour (destroying a good column and repeatedly attempting repairs without net progress), and the final normalized valid columns term provides a smooth gradient when success is rare.
Finally, the step penalty discourages policies that succeed only by wandering for a long time.
The robust evaluation over several random seeds is a practical way to suppress brittle policies that exploit accidental ordering effects in the rollout.

In the robust variant, each instance is evaluated over three rollouts with different RNG seeds and counted successful if at least two succeed.

\begin{table}[h]
\centering
\caption{Experiment A (rank 5) harness parameters.}
\begin{tabular}{@{}ll@{}}
\toprule
Parameter & Value \\
\midrule
Rank $n$ & $5$ \\
Pool size & $12$ \\
Step limit & $200$ (robust variant) \\
Instances & $200$ \\
Rollouts/instance & $3$ (majority success) \\
Fitness weights $(\alpha,\beta,\gamma,\delta,\varepsilon)$ & $(0.10,0.06,0.12,0.02,0.03)$ \\
\bottomrule
\end{tabular}
\end{table}

\subsubsection{Results and analysis}

We have run two experiments, each for rank 5, with multiple iteration. 

In the first experiment we worked with a fixed pool (ID 0\_9751). We achieved average score $\approx 1.06$, with fitness\_generic $\approx 1.11$ and fitness\_structured $\approx 1.07$, and overall success rate $1.00$.

In the second experiment we worked with randomized pools with hold-out seeds (ID 0\_9774).
Here we found fitness\_generic $\approx 1.0810$, fitness\_structured $\approx 1.0481$, overall success rate $1.00$ (under majority-vote over three rollouts).

The randomized-pool, multi-rollout evaluation is a substantial robustness check: it disrupts any policy that relies on fixed coordinate artifacts, and it penalizes policies that succeed only with low probability due to score noise.
The slight drop in mean fitness between fixed-pool and randomized-pool runs is consistent with a harder generalization setting: the policy still solves all instances but may require more repairs/steps when the pool is resampled.

\subsubsection{Mathematical algorithm extracted from the evolved policy}

Let $\mathsf{Policy}_5$ denote the evolved rank-5 scoring rule (Appendix~\ref{app:policy-r5}).
At a high level, it implements a lexicographically weighted objective:
At a high level, it implements a lexicographically weighted objective. It begins with \emph{hard vetoes}, forbidding inserts that reduce the number of valid columns or that break a full column. Subject to these vetoes it adds strong \emph{progress incentives}, heavily rewarding moves that increase the number of valid columns and especially moves that complete a full column. When a dependent column exists, it activates \emph{trap-resolution} logic that uses circuit information and deficit/duplicate counts to prioritize repairs, strongly penalizing moves that leave small circuits unresolved. Finally, it uses \emph{controlled noise} only in late stages or when trapped, to help escape local optima without sacrificing determinism on easy instances.

\begin{algorithm}[h]
\caption{Greedy local-exchange construction (as used in all experiments)}
\begin{algorithmic}[1]
\State Initialize assignment matrix $A\leftarrow -1$.
\For{$t=1$ to step limit}
  \If{$\#\{\text{full columns}\}=n$} \State \textbf{return success} \EndIf
  \State Enumerate legal insert moves and legal repair moves.
  \ForAll{legal moves $m$}
    \State Compute post-move features $\phi(A,m)$.
    \State Score $s(m)\leftarrow S(\phi(A,m))$.
  \EndFor
  \State Choose $m^\star \in \arg\max_m s(m)$ and apply $m^\star$ to $A$.
\EndFor
\State \textbf{return failure}
\end{algorithmic}
\end{algorithm}

\begin{conjecture}[Rank-5 toy RBC for circuit-rich pools]
\label{conj:toy5}
On the rank-5 circuit-rich instance class generated from the pool template above (including trap instances, and including arbitrary conjugation by $\mathrm{GL}(5,2)$), the de-randomized version of $\mathsf{Policy}_5$ succeeds: Algorithm~1 terminates with five full columns (five disjoint transversal bases).
\end{conjecture}

\subsection{Experiment B: Rank 7 circuit-rich pool benchmark}
\label{sec:expB}

\subsubsection{Evaluation harness}

This experiment sets $n=7$ and uses an analogous pool template in GF$(2)^7$, producing a pool of size about $2n+2=16$.
The evaluation uses $\texttt{MAX\_STEPS}=350$, $\texttt{NUM\_INSTANCES}=100$, and $\texttt{ROLLOUTS\_PER\_INSTANCE}=3$, with the same fitness weights $(0.10,0.06,0.12,0.02,0.03)$.

\begin{table}[h]
\centering
\caption{Experiment B (rank 7) harness parameters.}
\begin{tabular}{@{}ll@{}}
\toprule
Parameter & Value \\
\midrule
Rank $n$ & $7$ \\
Pool size & $\approx 16$ \\
Step limit & $350$ \\
Instances & $100$ \\
Rollouts/instance & $3$ \\
Fitness weights $(\alpha,\beta,\gamma,\delta,\varepsilon)$ & $(0.10,0.06,0.12,0.02,0.03)$ \\
\bottomrule
\end{tabular}
\end{table}

\subsubsection{Reported results and analysis (ID 0\_7176)}

Reported metrics:
\[
\text{fitness\_generic}\approx 1.05,\qquad
\text{fitness\_structured}\approx 0.88,\qquad
\text{overall\_success\_rate}=1.00,\qquad
\text{average\_score}\approx 0.98.
\]

\noindent\textbf{Interpretation.}
The policy solves all tested instances, but structured fitness drops, suggesting that trap instances at rank 7 require more repairs, more temporary validity breaks, and/or more steps.
This is plausible given that:
(i) each column has more possible near-dependencies, and
(ii) each repair move potentially propagates a constraint across two columns.

\subsubsection{Mathematical algorithm extracted from the evolved rank-7 policy}

The rank-7 policy (Appendix~\ref{app:policy-r7}) retains the same core structure but replaces explicit circuit-tiering by an \emph{adaptive critical threshold} $\tau$ depending on progress and a "desperation" indicator.
It also introduces two nonstandard exploration devices:
(i) additive score noise in late game, and (ii) a deterministic "state signature" congruence bias ("numerology") to perturb tie-breaking in mid-to-late stages.
Mathematically, these are best viewed as randomized/biased tie-breaking mechanisms in the greedy step.

\paragraph{Explicit score decomposition.}
Introduce the shorthand variables (computed from features)
\[
U = 1 + 5.8\left(\frac{\mathrm{full}}{7}\right)^{2.5},\qquad
T = 1 + 4.0\left(\frac{\mathrm{full}}{7}\right)^{2.2},
\]
and the \emph{critical circuit threshold}
\[
\tau = 3 + \mathbf{1}[\mathrm{full}\ge 4] + \mathbf{1}[\mathrm{full}\ge 6] + \mathbf{1}[\text{desperate endgame}].
\]
Let $c$ be the (oracle-provided) circuit size after the move, and let
$\mathrm{crit}=\mathbf{1}[c>0\text{ and }c\le \tau]$.
The policy assigns $-\infty$ to any insert that breaks a full column or creates a critical trap.
Otherwise, it builds an additive score in several pieces. It begins with a large progress term $120{,}000\cdot \Delta\mathrm{valid}$ (scaled up in the endgame). It then adds move-specific rewards/penalties (notably $+300{,}000$ if a repair eliminates a trap, and large penalties if a repair leaves a critical trap). It includes a quality penalty of the form
\[
S_{\mathrm{quality}} = T\cdot\bigl(2000\mathrm{def}(T_{\mathrm{target}})+1500\mathrm{def}(T_{\mathrm{source}})+3500\mathrm{dup}(T_{\mathrm{target}})\bigr),
\]. Finally, it adds exploration perturbations (late-game noise, and a deterministic congruence signature bonus/penalty).

Finally, the entire score is multiplied by $U$ (favoring decisive moves in late progress states).

\begin{conjecture}[Rank-7 toy RBC and de-randomization]
\label{conj:toy7}
On the rank-7 circuit-rich instance class generated from the pool template above (including trap instances and $\mathrm{GL}(7,2)$ conjugation), the evolved policy succeeds with probability $1$ over its internal randomness.
Moreover, there exists a deterministic de-randomization (removing noise and replacing the state-signature heuristic by deterministic tie-breaking) that succeeds on the same class.
\end{conjecture}

\subsection{Experiment C: Variable rank benchmark (ranks 5-13)}
\label{sec:expC}

\subsubsection{Evaluation harness}

This benchmark mixes ranks $n\in\{5,7,9,11,13\}$.
For each instance, a pool of size $\lceil 2.2n\rceil$ is created by the same recipe (basis vectors and low-complexity sums), then $n$ row-bases are sampled (generic vs.\ trap).
The step limit is $\texttt{step\_limit}=20n$.
The evaluator uses $\texttt{NUM\_INSTANCES}=60$ and $\texttt{ROLLOUTS\_PER\_INSTANCE}=1$.
The penalty for breaking validity is increased to $\gamma=0.20$ in this benchmark.

\begin{table}[h]
\centering
\caption{Experiment C (variable rank) harness parameters.}
\begin{tabular}{@{}ll@{}}
\toprule
Parameter & Value \\
\midrule
Ranks & $\{5,7,9,11,13\}$ \\
Pool size & $\lceil 2.2n\rceil$ \\
Step limit & $20n$ \\
Instances & $60$ \\
Rollouts/instance & $1$ \\
Fitness weights $(\alpha,\beta,\gamma,\delta,\varepsilon)$ & $(0.10,0.06,0.20,0.02,0.03)$ \\
\bottomrule
\end{tabular}
\end{table}

The final metrics are:
\[
\text{fitness\_generic}\approx 0.87,\qquad
\text{fitness\_structured}\approx 0.81,\qquad
\text{overall\_success\_rate}=1.00,\qquad
\text{average\_score}\approx 0.89.
\]

Relative fitness decreases compared to fixed-rank experiments because:
(i) one policy must generalize across five ranks, (ii) the step budget is tight ($20n$), and (iii) break-valid events are more heavily penalized ($\gamma=0.20$).

\subsubsection{Mathematical algorithm extracted from the evolved scale-aware policy}

The variable-rank policy (Appendix~\ref{app:policy-var}) defines scale-dependent quantities: \emph{trap energy} $E(C)\propto 1/(|C|-1)$ (higher for smaller circuits); \emph{pressure} $P$ increasing with progress and "columns remaining", with an exponential endgame component; \emph{barrier porosity} for breaking full columns during repairs inside deep traps; and a \emph{singularity avoidance penalty} discouraging moves that create high deficit and high duplicates in the presence of a circuit.

Conceptually, these define a piecewise potential function that trades off (a) making columns full, (b) keeping columns independent, and (c) reducing structural degeneracy, while allowing carefully controlled sacrifices when doing so sharply decreases trap complexity.

\paragraph{Explicit scaling definitions.}
Let $n=\texttt{rank}$, $\rho=\texttt{progress\_ratio}=\mathrm{full}/n$, and $s=n/5$.
Let $c$ be the (oracle-provided) circuit size after the candidate move.
Define the \emph{trap energy}
\[
E(c)=
\begin{cases}
0, & c=0,\\[2mm]
\displaystyle \frac{20}{\max(c,1.5)-1}s, & c>0,
\end{cases}
\qquad
I=\min\!\left(1,\frac{E(c)}{10s}\right).
\]
A "pressure" parameter $P\ge 1$ is computed from $\rho$ and the number of columns remaining $n-\mathrm{full}$, with a linear mid-game ramp and an exponential endgame surge (see code in Appendix~\ref{app:policy-var}).
The policy then defines a \emph{precision multiplier} $\Pi$ scaling penalties on deficits and duplicates, increasing with $P$ and with $\rho$, a \emph{barrier} $B$ for breaking a full column, $B\approx 100{,}000s$ but made \emph{porous} (reduced) during repairs in high trap intensity states when the post-move structure improves, and a \emph{singularity penalty} activated when a move creates simultaneously positive deficit, positive duplicates, and a circuit.

Insert moves are vetoed if they break a full column or decrease validity.
Repairs receive a dominant reward when they remove a circuit ($c=0$) and are penalized otherwise by an amount proportional to $E(c)$ and $P$.
A small adaptive noise term of magnitude $\approx 3500s$ is added, increasing with trap/stagnation and decreasing with pressure.

\begin{conjecture}[Scale-invariant toy RBC on circuit-rich pools]
\label{conj:scaling}
For $n\in\{5,7,9,11,13\}$ and instances generated as above with pool size $\lceil 2.2n\rceil$, the evolved scale-aware policy succeeds within $O(n^2)$ steps.
More ambitiously, there exists a constant $c>2$ such that for all $n\ge 5$, on pool size $\lceil cn\rceil$ generated by the same recipe, a deterministic de-randomization of the policy succeeds.
\end{conjecture}

\appendix 
\section{Graph reconstruction}
\label{app:graph}
\subsection{Bipartite graph reconstruction} 
\subsubsection{AI summary of the experiment} We present the Gemini-generated summary of the experiment first, which we find very informative and useful. It provides hints and insights about the attempted methods and their evolution.
\begin{tcolorbox}[
  enhanced,
  breakable,
  title={Gemini-generated summary},
  colback=gray!3,
  colframe=black!30,
  title after break={Gemini-generated summary (continued)}
]
The problem involves reconstructing a hidden bipartite graph's adjacency matrix by analyzing a "deck" of its vertex-deleted subgraphs, requiring the algorithm to deduce the original topology from the structural invariants preserved in these partial views.

\noindent \textbf{Global Statistics and Degree Constraints}

The initial approach focused on recovering the most basic graph properties using Kelly’s Lemma, a fundamental theorem in graph reconstruction that allows for the calculation of the original graph's total edge count and vertex degree sequence from the deck of subgraphs. Programs like $0\_1$ and $0\_7$ used these recovered global statistics to attempt reconstruction via greedy bipartite realization algorithms (such as variants of Havel-Hakimi or Gale-Ryser). These methods failed to produce high scores because they only satisfied the degree constraints, resulting in graphs that were "degree-equivalent" to the target but topologically distinct (isomorphic ambiguity).

\noindent \textbf{Local Connectivity via Degree Classes}

A significant breakthrough occurred with Program $0\_128$, which shifted the strategy from simply satisfying global degree sequences to inferring local connectivity constraints. Instead of treating all vertices with the same degree as interchangeable, this algorithm compared the degree distributions within specific cards against the global sequence to deduce the exact number of edges connecting a vertex to specific degree-based equivalence classes (e.g., "how many connections does this vertex have to vertices of degree 5?"). This effectively partitioned the reconstruction problem into smaller, constrained blocks, drastically reducing the search space and increasing the score from ~1.15 to ~85.9.

\noindent \textbf{Weisfeiler-Lehman Refinement and Signature Flow}

The next major leap in performance came with Program $0\_220$, which introduced Weisfeiler-Lehman (WL) signature refinement. While the previous approach grouped vertices only by their immediate degree, WL refinement iteratively classifies vertices based on the structural properties of their neighbors, creating highly specific isomorphism-invariant signatures. Subsequent programs (e.g., $0\_336$, $0\_621$) utilized these signatures to model the graph reconstruction as a flow-based system of equations. By calculating the "flow" of signatures between the global graph and the local cards, the algorithms could mathematically deduce the precise number of edges required between different structural vertex types, pushing the score over 330.

\noindent \textbf{Spectral Fingerprinting and 2-Card Voting}

Program $0\_2458$ introduced a new paradigm by incorporating Spectral Graph Theory alongside structural refinement. This approach utilized Singular Value Decomposition (SVD) to generate rotation-invariant spectral signatures for subgraphs. The algorithm inferred the existence of "2-cards" (subgraphs with two vertices removed) and used a voting mechanism based on these spectral signatures to assign confidence scores to individual edges. This allowed the system to move beyond deterministic deduction into probabilistic reconstruction, where edge placement was guided by the consensus of structural evidence across the deck.

\noindent \textbf{Reconstruction as Min-Cost Max-Flow Optimization}

The final evolution in strategy, exemplified by programs like $0\_7568$ and the top-performing $0\_17333$, abandoned greedy heuristics in favor of global optimization. These algorithms framed the matrix reconstruction as a Minimum-Cost Maximum-Flow problem. By assigning "costs" based on the compatibility of WL-derived neighbor profiles and spectral fingerprints, the algorithms could solve for the entire adjacency matrix simultaneously. This ensured that the final graph not only satisfied global degree constraints but also minimized the structural "disagreement" between the reconstructed vertices and the empirical evidence found in the card deck.
\end{tcolorbox}

\subsubsection{The evolved algorithm} The following is the final evolved reconstruction function for $(u,v)$-bipartite graphs. It implements type-based constraints and min-cost flow selection using two-deletion WL-style compatibility signatures.

\begin{lstlisting}[language=Python,basicstyle=\ttfamily\small]
# EVOLVE-BLOCK-START
import numpy as np
from collections import Counter, defaultdict
import networkx as nx

MCMF_COST_SCALE = 1_000_000

def _compute_node_features(
    degrees: np.ndarray,
    n_sums: np.ndarray,
    n_sq_sums: np.ndarray,
    nn_sums: np.ndarray,
    nn_sq_sums: np.ndarray,
    nnn_sums: np.ndarray,
    n_distinct_deg_counts: np.ndarray
) -> list[tuple]:
    return list(zip(
        degrees.tolist(), n_sums.tolist(), n_sq_sums.tolist(),
        nn_sums.tolist(), nn_sq_sums.tolist(), nnn_sums.tolist(),
        n_distinct_deg_counts.tolist()
    ))

def get_neighbor_profile_of_removed_vertex(global_degs_counts: Counter, card_degs: np.ndarray) -> tuple:
    c_counts = Counter(card_degs)
    neighbors = []
    all_degs_keys = set(global_degs_counts.keys()) | set(c_counts.keys())
    if not all_degs_keys: return ()
    max_deg = max(all_degs_keys)
    current_shift = 0 
    for k in range(max_deg, -1, -1): 
        x_k = global_degs_counts[k] + current_shift - c_counts[k]
        if x_k > 0: neighbors.extend([k] * x_k)
        current_shift = x_k
    return tuple(sorted(neighbors))

def remove_from_profile(profile: tuple, val: int) -> tuple:
    p_list = list(profile)
    if val in p_list: p_list.remove(val)
    return tuple(p_list)

def get_matrix_profiles(matrix: np.ndarray, axis: int) -> list:
    if axis == 0:
        other_degs = np.sum(matrix, axis=1).astype(int)
        return [tuple(sorted(other_degs[matrix[:, j] == 1].tolist())) for j in range(matrix.shape[1])]
    else:
        other_degs = np.sum(matrix, axis=0).astype(int)
        return [tuple(sorted(other_degs[matrix[i, :] == 1].tolist())) for i in range(matrix.shape[0])]

def get_matrix_signature_components(matrix: np.ndarray):
    if matrix.size == 0:
        return (np.array([], dtype=np.int64),) * 14
    
    r_d = np.sum(matrix, axis=1, dtype=np.int64)
    c_d = np.sum(matrix, axis=0, dtype=np.int64)
    
    r_n_sum = matrix.dot(c_d)
    r_n_sq_sum = matrix.dot(c_d ** 2)
    c_n_sum = matrix.T.dot(r_d)
    c_n_sq_sum = matrix.T.dot(r_d ** 2)

    r_nn_sum = matrix.dot(c_n_sum)
    r_nn_sq_sum = matrix.dot(c_n_sq_sum)
    c_nn_sum = matrix.T.dot(r_n_sum)
    c_nn_sq_sum = matrix.T.dot(r_n_sq_sum)

    r_nnn_sum = matrix.dot(c_nn_sum)
    c_nnn_sum = matrix.T.dot(r_nn_sum)

    r_n_distinct = np.zeros(matrix.shape[0], dtype=np.int64)
    for i in range(matrix.shape[0]):
        nbs = np.nonzero(matrix[i])[0]
        if len(nbs) > 0: r_n_distinct[i] = len(set(c_d[j] for j in nbs))

    c_n_distinct = np.zeros(matrix.shape[1], dtype=np.int64)
    for j in range(matrix.shape[1]):
        nbs = np.nonzero(matrix[:, j])[0]
        if len(nbs) > 0: c_n_distinct[j] = len(set(r_d[i] for i in nbs))
    
    return r_d, r_n_sum, r_n_sq_sum, r_nn_sum, r_nn_sq_sum, r_nnn_sum, r_n_distinct, \
           c_d, c_n_sum, c_n_sq_sum, c_nn_sum, c_nn_sq_sum, c_nnn_sum, c_n_distinct

def _cosine_sim(t1: tuple, t2: tuple) -> float:
    if not t1 and not t2: return 1.0
    if not t1 or not t2: return 0.0
    c1, c2 = Counter(t1), Counter(t2)
    common = c1.keys() & c2.keys()
    dot = sum(c1[k] * c2[k] for k in common)
    n1 = sum(v**2 for v in c1.values())
    n2 = sum(v**2 for v in c2.values())
    return dot / (np.sqrt(n1) * np.sqrt(n2)) if n1 and n2 else 0.0

def get_2card_signature(matrix: np.ndarray, wl_steps: int = 3) -> tuple:
    rows, cols = matrix.shape
    if rows == 0 or cols == 0: return ((), (), matrix.shape)

    comps = get_matrix_signature_components(matrix)
    r_colors = _compute_node_features(*comps[:7])
    c_colors = _compute_node_features(*comps[7:])

    for _ in range(wl_steps):
        next_r = [0] * rows
        for i in range(rows):
            nbs = tuple(sorted(c_colors[j] for j in np.nonzero(matrix[i])[0]))
            next_r[i] = hash((r_colors[i], nbs))
        
        next_c = [0] * cols
        for j in range(cols):
            nbs = tuple(sorted(r_colors[i] for i in np.nonzero(matrix[:, j])[0]))
            next_c[j] = hash((c_colors[j], nbs))
        
        r_colors, c_colors = next_r, next_c

    return (tuple(sorted(r_colors)), tuple(sorted(c_colors)), matrix.shape)

def reconstruct(deck: list[np.ndarray], u: int, v: int) -> np.ndarray:
    if u == 0 or v == 0: return np.zeros((u, v), dtype=int)
    if u == 1 and v == 1: return np.zeros((1, 1), dtype=int)

    sum_card_edges = sum(int(np.sum(c)) for c in deck)
    m = sum_card_edges // (u + v - 2)

    u_indices = [i for i, c in enumerate(deck) if c.shape == (u - 1, v)]
    v_indices = [i for i, c in enumerate(deck) if c.shape == (u, v - 1)]
    if len(u_indices) != u or len(v_indices) != v: return np.zeros((u, v), dtype=int)

    d_u = [m - int(np.sum(deck[i])) for i in u_indices]
    d_v = [m - int(np.sum(deck[j])) for j in v_indices]
    d_v_counts, d_u_counts = Counter(d_v), Counter(d_u)

    def get_types(indices, degrees, global_opp_counts, is_u_card):
        types = []
        for i, idx in enumerate(indices):
            card = deck[idx]
            axis = 0 if is_u_card else 1
            local_degs = np.sum(card, axis=axis).astype(np.int64)
            prof = get_neighbor_profile_of_removed_vertex(global_opp_counts, local_degs)
            types.append((degrees[i], prof))
        return types

    u_types = get_types(u_indices, d_u, d_v_counts, True)
    v_types = get_types(v_indices, d_v, d_u_counts, False)
    u_type_counts, v_type_counts = Counter(u_types), Counter(v_types)
    dist_u_types = sorted(list(u_type_counts.keys()), reverse=True)
    dist_v_types = sorted(list(v_type_counts.keys()), reverse=True)

    def compute_reqs(indices, degrees, dist_opp_types, opp_type_counts, is_u_card):
        reqs_list = []
        for i, idx in enumerate(indices):
            card = deck[idx]
            axis = 0 if is_u_card else 1
            prof_local = get_matrix_profiles(card, axis=axis)
            d_local = np.sum(card, axis=axis).astype(int)
            observed = Counter(zip(d_local, prof_local))
            
            reqs = {}
            shifted_in = Counter()
            remover_deg = degrees[i]
            
            for T in dist_opp_types:
                d, prof = T
                needed = opp_type_counts[T] - (observed[T] - shifted_in[T])
                if needed > 0:
                    reqs[T] = needed
                    shifted_in[(d - 1, remove_from_profile(prof, remover_deg))] += needed
            reqs_list.append(reqs)
        return reqs_list

    u_reqs = compute_reqs(u_indices, d_u, dist_v_types, v_type_counts, True)
    v_reqs = compute_reqs(v_indices, d_v, dist_u_types, u_type_counts, False)

    u_2card_sigs, v_2card_sigs = [], []
    for i, idx in enumerate(u_indices):
        card = deck[idx]
        local_v_types = list(zip(np.sum(card, axis=0).astype(int), get_matrix_profiles(card, axis=0)))
        sub_sigs = defaultdict(list)
        for c_idx in range(v):
            sub_sigs[local_v_types[c_idx]].append(get_2card_signature(np.delete(card, c_idx, axis=1)))
        u_2card_sigs.append({k: tuple(sorted(v)) for k, v in sub_sigs.items()})

    for j, idx in enumerate(v_indices):
        card = deck[idx]
        local_u_types = list(zip(np.sum(card, axis=1).astype(int), get_matrix_profiles(card, axis=1)))
        sub_sigs = defaultdict(list)
        for r_idx in range(u):
            sub_sigs[local_u_types[r_idx]].append(get_2card_signature(np.delete(card, r_idx, axis=0)))
        v_2card_sigs.append({k: tuple(sorted(v)) for k, v in sub_sigs.items()})

    all_card_sigs = [get_2card_signature(c) for c in deck]
    sim_matrix = np.zeros((u + v, u + v))
    for i in range(u + v):
        sim_matrix[i, i] = 1.0
        for j in range(i + 1, u + v):
            s1, s2 = all_card_sigs[i], all_card_sigs[j]
            sim = (_cosine_sim(s1[0], s2[0]) + _cosine_sim(s1[1], s2[1])) / 2.0
            sim_matrix[i, j] = sim_matrix[j, i] = sim

    u_sort_keys = {i: (all_card_sigs[u_indices[i]], tuple(np.sort(sim_matrix[u_indices[i], :]))) for i in range(u)}
    v_sort_keys = {j: (all_card_sigs[v_indices[j]], tuple(np.sort(sim_matrix[v_indices[j], :]))) for j in range(v)}
    u_ranks = {idx: r for r, idx in enumerate(sorted(range(u), key=lambda i: u_sort_keys[i]))}
    v_ranks = {idx: r for r, idx in enumerate(sorted(range(v), key=lambda j: v_sort_keys[j]))}

    rows_by_type = defaultdict(list)
    for i, t in enumerate(u_types): rows_by_type[t].append(i)
    for t in rows_by_type: rows_by_type[t].sort(key=lambda x: u_sort_keys[x])

    cols_by_type = defaultdict(list)
    for j, t in enumerate(v_types): cols_by_type[t].append(j)
    for t in cols_by_type: cols_by_type[t].sort(key=lambda x: v_sort_keys[x])

    adj = np.zeros((u, v), dtype=int)
    for r_type in dist_u_types:
        dr, prof_r = r_type
        for c_type in dist_v_types:
            dc, prof_c = c_type
            r_idxs, c_idxs = rows_by_type[r_type], cols_by_type[c_type]
            if not r_idxs or not c_idxs: continue

            target_v_conn = (dc - 1, remove_from_profile(prof_c, dr))
            target_u_conn = (dr - 1, remove_from_profile(prof_r, dc))
            target_v_non = c_type
            target_u_non = r_type

            G_flow = nx.DiGraph()
            source, sink = "S", "T"
            candidates = []

            for r_i in r_idxs:
                cap = u_reqs[r_i].get(c_type, 0)
                if cap <= 0: continue
                G_flow.add_edge(source, f"R_{r_i}", capacity=cap, weight=0)
                
                u_conn = Counter(u_2card_sigs[r_i].get(target_v_conn, ()))
                u_non = Counter(u_2card_sigs[r_i].get(target_v_non, ()))
                
                for c_j in c_idxs:
                    if v_reqs[c_j].get(r_type, 0) <= 0: continue
                    v_conn = Counter(v_2card_sigs[c_j].get(target_u_conn, ()))
                    v_non = Counter(v_2card_sigs[c_j].get(target_u_non, ()))
                    
                    sc_conn = (2.0 * sum((u_conn & v_conn).values()) / (sum(u_conn.values()) + sum(v_conn.values()))) if (u_conn or v_conn) else 0
                    sc_non = (2.0 * sum((u_non & v_non).values()) / (sum(u_non.values()) + sum(v_non.values()))) if (u_non or v_non) else 0
                    candidates.append((sc_conn - sc_non, r_i, c_j))

            for c_j in c_idxs:
                cap = v_reqs[c_j].get(r_type, 0)
                if cap > 0: G_flow.add_edge(f"C_{c_j}", sink, capacity=cap, weight=0)
            
            candidates.sort(key=lambda x: (-x[0], u_ranks[x[1]], v_ranks[x[2]]))
            for conf, r_i, c_j in candidates:
                cost = int(-conf * MCMF_COST_SCALE) - (u_ranks[r_i] * v + v_ranks[c_j])
                G_flow.add_edge(f"R_{r_i}", f"C_{c_j}", capacity=1, weight=cost)
            
            try:
                flow = nx.max_flow_min_cost(G_flow, source, sink)
                for r_i in r_idxs:
                    for c_j in c_idxs:
                        if flow.get(f"R_{r_i}", {}).get(f"C_{c_j}", 0) == 1:
                            adj[r_i, c_j] = 1
            except nx.NetworkXError:
                pass
    return adj
# EVOLVE-BLOCK-END
\end{lstlisting}

\subsection{Planar evolved block}
\noindent The following is the final representative evolved planar reconstruction function achieving perfect reconstruction under the hardened planar harness (exact deck equality verification).

\begin{lstlisting}[language=Python,basicstyle=\ttfamily\small]
# EVOLVE-BLOCK-START
import numpy as np
import networkx as nx
import math
from collections import Counter
from itertools import combinations, product

def reconstruct(deck: list[np.ndarray], n: int) -> np.ndarray:
    # 1. Kelly's Lemma for edge count
    sum_card_edges = sum(np.sum(card) // 2 for card in deck)
    if n > 2:
        m = sum_card_edges // (n - 2)
    elif n == 2:
        return np.array([[0, 1], [1, 0]], dtype=int)
    else:
        return np.zeros((n, n), dtype=int)

    # 2. Derive Degree Sequence of G
    card_edges = [np.sum(card) // 2 for card in deck]
    removed_degrees = [m - ce for ce in card_edges]
    G_deg_counts = Counter(removed_degrees)

    candidates = []

    # 3. Analyze each card to find valid re-attachment configurations
    for i, card in enumerate(deck):
        deg_v = removed_degrees[i]
        C_degrees = np.sum(card, axis=1).astype(int)
        C_deg_counts = Counter(C_degrees)

        target_rem = G_deg_counts.copy()
        target_rem[deg_v] -= 1
        if target_rem[deg_v] == 0:
            del target_rem[deg_v]

        req_conn = {}
        carry = 0
        possible = True

        all_degs = set(C_deg_counts.keys()) | set(target_rem.keys())
        if not all_degs:
            sorted_degs = []
        else:
            sorted_degs = range(min(all_degs), max(all_degs) + 2)

        for x in sorted_degs:
            n_c = C_deg_counts[x]
            n_g = target_rem[x]
            next_carry = carry + n_c - n_g
            if next_carry < 0 or next_carry > n_c:
                possible = False
                break
            if next_carry > 0:
                req_conn[x] = next_carry
            carry = next_carry

        if possible and carry == 0:
            if sum(req_conn.values()) == deg_v:
                ambiguity = 1
                for x, count in req_conn.items():
                    ambiguity *= math.comb(C_deg_counts[x], count)
                candidates.append({
                    'ambiguity': ambiguity,
                    'card_idx': i,
                    'req': req_conn,
                    'card_degs': C_degrees
                })

    candidates.sort(key=lambda x: x['ambiguity'])

    # 5. Try to reconstruct from best candidates
    for cand in candidates:
        if cand['ambiguity'] > 2000:
            continue

        card_idx = cand['card_idx']
        req = cand['req']
        card = deck[card_idx]
        C_degrees = cand['card_degs']

        deg_to_indices = {}
        for idx, d in enumerate(C_degrees):
            deg_to_indices.setdefault(d, []).append(idx)

        choice_pools = []
        for d, count in req.items():
            pool = list(combinations(deg_to_indices[d], count))
            choice_pools.append(pool)

        for p in product(*choice_pools):
            neighbors = [idx for group in p for idx in group]

            A_hat = np.zeros((n, n), dtype=int)
            A_hat[:-1, :-1] = card
            A_hat[neighbors, n-1] = 1
            A_hat[n-1, neighbors] = 1

            # exact_deck_equal and get_deck_from_adj are provided by harness
            if exact_deck_equal(get_deck_from_adj(A_hat), deck):
                return A_hat

    return nx.to_numpy_array(nx.gnm_random_graph(n, m), dtype=int)
# EVOLVE-BLOCK-END
\end{lstlisting}

\section{Alon-Tarsi conjecture}
\label{app:alon}

\subsection{Experiment 1: evolving a nontrivial sign-flipping involution with biased residual set}

\subsubsection{AI summary of the experiment}
The following is the short description of the evolution process generated by Gemini

\begin{tcolorbox}[
  enhanced,
  breakable,
  title={Gemini-generated summary},
  colback=gray!3,
  colframe=black!30,
  title after break={Gemini-generated summary (continued)}
]
The problem is to create a deterministic algorithm that transforms any Latin square of even order into a different valid Latin square with the opposite mathematical sign (parity), such that applying the transformation twice returns the original square.

\noindent \textbf{Deterministic Transpositions}

The initial approach, represented by 0\_1, attempted to solve the problem by partitioning the input space using hashes and applying different deterministic strategies based on the input's hash. While this ensured the involution property (the function is its own inverse), the sign-flipping capability was inconsistent. The strategy improved significantly with 0\_5, which abandoned hash-based randomness in favor of size-specific elementary swaps. By applying a fixed swap of rows or columns depending on the order nn, the program leveraged the algebraic property that an odd permutation of rows negates the sign of a Latin square.

\noindent \textbf{Local Cycle Decompositions}

A major mathematical breakthrough occurred with 0\_53, which moved beyond simple row/column swaps to internal structural manipulation. This algorithm introduced odd-cycle trades, identifying odd-length cycles in the permutation defined by two rows and swapping symbols only along those cycles. This exploits the insight that swapping values along a single odd cycle acts as a localized transposition, guaranteeing a sign flip while preserving the Latin square constraints. Subsequent iterations like 0\_62 and 0\_134 refined this by targeting disjoint row pairs and ensuring the total number of performed trades remained odd, thereby strictly enforcing the sign-reversal requirement.

\noindent \textbf{Global Symmetries and Conjugation}

Program 0\_175 introduced a different class of transformation based on global algebraic symmetry, specifically row-symbol conjugation. By mapping the Latin square triple (r,c,v)(r,c,v) to (r,v,c)(r,v,c), the algorithm utilized the geometric properties of the square (parastrophy) rather than local swaps. 0\_324 further refined this by employing row-symbol swapping (r,c,v)→(v,c,r)(r,c,v)→(v,c,r). These approaches proved highly effective because conjugation acts as a structural involution that, for specific orders of nn, consistently reverses the sign of the square.

\noindent \textbf{Conflict Resolution for Invertibility}

As the algorithms began attempting more complex searches for cycle trades, ensuring the map remained a perfect involution (applying it twice returns exactly the original input) became difficult. 0\_275 addressed this by introducing backtracking conflict verification. This mechanism ensures that a high-priority rule is only applied if it does not interfere with the ability of the reverse transformation to identify the same rule, essentially checking that the move is a locally stable maximum in the transition graph. 0\_796 expanded this into a prioritized search with recursive checks, allowing the system to explore diverse row and column matchings to find a valid sign-reversing move that is guaranteed to be reversible.

\noindent \textbf{Hybrid Strategies and Modulo Arithmetic}

A significant leap in performance came with 0\_817, which recognized that the mathematical properties of Latin squares differ depending on their order modulo 4. This program implemented a bifurcated strategy: it applies global conjugation for orders $n \equiv 2\ (mod\ 4)$ and a prioritized search for odd-cycle trades for orders $n \equiv 0\ (mod\ 4)$. This hybrid approach acknowledges that while conjugation is a powerful tool, it does not universally flip signs for all even orders, necessitating different algebraic techniques for different dimensions.

\noindent \textbf{Optimizing Residual Bias via "Sabotage"}

The final evolution in strategy focused on the "Residual Bias" metric—optimizing the score even when a perfect sign flip was not found. 0\_946 and 0\_1026 introduced the concept of sign-preserving "sabotage". When the algorithm encounters a difficult case where a sign-reversing move cannot be cleanly resolved, it intentionally performs an even-cycle trade or a specific symbol swap. While this fails the sign-flip objective, it forces the failure into a known parity state, maximizing the squared sign bias of the residual set. Programs 0\_2052 and 0\_9094 perfected this by combining cycle decomposition, priority-based conflict checking, and deterministic sabotage, creating a highly robust system that flips signs whenever possible and fails gracefully and consistently when not.
\end{tcolorbox}

\subsubsection{The evolved algorithm} The final evolve  block (run 9094 after 13.6 hours running) is the following.
\medskip
\begin{lstlisting}[language=Python]
# EVOLVE-BLOCK-START
# The following code is the part that AlphaEvolve can modify.
# Its goal is to implement a map F: L -> L' on Latin squares.
# Ideally, F should be a sign-reversing involution for most inputs L.

import numpy as np

def _find_trade(S, n, idx1, idx2, mode, strategy):
    if mode == 0:  # Row mode
        elem1, elem2 = S[idx1], S[idx2]
        elem2_pos = np.argsort(elem2)
        perm = elem2_pos[elem1]
    else:  # Column mode
        elem1, elem2 = S[:, idx1], S[:, idx2]
        elem2_pos = np.argsort(elem2)
        perm = elem2_pos[elem1]

    visited = np.zeros(n, dtype=bool)
    odd_cycles, even_cycles = [], []
    for i in range(n):
        if not visited[i]:
            cycle = []
            curr = i
            while not visited[curr]:
                visited[curr] = True
                cycle.append(curr)
                curr = perm[curr]
            if len(cycle) > 1:
                (odd_cycles if len(cycle) % 2 == 1 else even_cycles).append(cycle)

    odd_cycles.sort(key=min)
    even_cycles.sort(key=min)

    trade_indices = []
    if strategy == "flip":
        if not odd_cycles:
            return None
        trade_indices = odd_cycles[0][:]
        if even_cycles:
            trade_indices.extend(even_cycles[0])
    elif strategy == "sabotage":
        # Goal: Sign-preserving trade (Even cycle)
        if not even_cycles:
            return None
        candidate_cycle = even_cycles[0]
        # Deterministic rarity check (accept approx 5%)
        h_val = sum(x * (x + 13) for x in candidate_cycle)
        if h_val % 20 != 0:
            return None
        trade_indices = candidate_cycle[:]
    else:
        return None

    S_new = S.copy()
    if mode == 0:
        S_new[idx1, trade_indices], S_new[idx2, trade_indices] = \
            S_new[idx2, trade_indices], S_new[idx1, trade_indices]
    else:
        S_new[trade_indices, idx1], S_new[trade_indices, idx2] = \
            S_new[trade_indices, idx2], S_new[trade_indices, idx1]
    return S_new

def parity_switching_map(L: list[list[int]], n: int) -> list[list[int]]:
    """
    Applies a sign-reversing involution to the Latin square L.
    Strategies:
    1. Search for a 'flip' trade (odd cycle + optional even cycle) in rows/cols.
    2. Search for a 'sabotage' trade (even cycle) with rarity constraints.
    3. Fallback to a conditional strategy for residual bias.
    """
    L_np = np.array(L)
    half_n = n // 2

    ROW_MODE = 0
    COL_MODE = 1

    rng = np.random.RandomState(42) # Deterministic random state
    
    # Flattened list of all candidate trades to check in order.
    # Each entry is (mode, idx1, idx2, strategy)
    all_candidate_trades_sequence = []

    # Configuration for different orders
    use_sabotage = (n > 8)
    num_random_iters = 50

    # 0. Sabotage Phase (Row mode) - Sign-preserving trades
    if use_sabotage:
        sab_perm = rng.permutation(n)
        sab_pairs_to_check = [(sab_perm[0], sab_perm[1])]
        if half_n >= 2:
             sab_pairs_to_check.append((sab_perm[2], sab_perm[3]))

        for idx1, idx2 in sab_pairs_to_check:
            all_candidate_trades_sequence.append((ROW_MODE, idx1, idx2, "sabotage"))

    # 1. Row Matchings (Flip strategy)
    # Standard stride-based matching
    for i in range(half_n):
        all_candidate_trades_sequence.append((ROW_MODE, i, i + half_n, "flip"))

    # Random matchings
    for _ in range(num_random_iters):
        perm = rng.permutation(n)
        for k in range(half_n):
            all_candidate_trades_sequence.append((ROW_MODE, perm[2*k], perm[2*k+1], "flip"))

    # 2. Column Matchings (Flip strategy)
    # Standard stride-based matching
    for i in range(half_n):
        all_candidate_trades_sequence.append((COL_MODE, i, i + half_n, "flip"))

    # Random matchings
    for _ in range(num_random_iters):
        perm = rng.permutation(n)
        for k in range(half_n):
            all_candidate_trades_sequence.append((COL_MODE, perm[2*k], perm[2*k+1], "flip"))

    # Function to check for prior conflicts
    def _has_prior_conflict(S, current_trade_index):
        for i in range(current_trade_index):
            mode, idx1, idx2, strategy = all_candidate_trades_sequence[i]
            if _find_trade(S, n, idx1, idx2, mode, strategy) is not None:
                return True
        return False

    # Main search for trades
    for current_trade_index, (mode, idx1, idx2, strategy) in enumerate(all_candidate_trades_sequence):
        L_tent_result = _find_trade(L_np, n, idx1, idx2, mode, strategy)
        
        if L_tent_result is not None:
            # A trade was found. Check if it conflicts with any earlier potential trade in the sequence.
            if not _has_prior_conflict(L_tent_result, current_trade_index):
                return L_tent_result.tolist()

    # Fallback: Conditional strategy to ensure residual bias.
    # The default 'conjugate_row_symbol' operation (L_conj[rows, L_np] = cols)
    # is sign-flipping if n/2 is odd (n=10, 14, 26).
    # To ensure a biased residual set, we need a sign-preserving fallback in such cases.
    # A symbol swap (0<->1) is always a sign-preserving involution for even n > 1.
    if half_n % 2 == 1: # For n=10, 14, 26, 'conjugate_row_symbol' flips sign.
        # Use a sign-preserving symbol swap as fallback.
        L_fallback = L_np.copy()
        mask_0, mask_1 = (L_fallback == 0), (L_fallback == 1)
        L_fallback[mask_0] = 1
        L_fallback[mask_1] = 0
        return L_fallback.tolist()
    else: # For n=8, 12, 'conjugate_row_symbol' preserves sign.
        # Use 'conjugate_row_symbol' as fallback.
        L_conj = np.zeros_like(L_np)
        rows, cols = np.indices((n, n))
        # This performs row-symbol conjugation: L_conj[r, k] = c if L_np[r, c] = k.
        L_conj[rows, L_np] = cols
        return L_conj.tolist()

# EVOLVE-BLOCK-END
\end{lstlisting}

\subsection*{Experiment 2 EVOLVE block}
\begin{lstlisting}[language=Python]
# ============================================================
# EVOLVE-BLOCK-START
# ============================================================
import numpy as np

def _get_odd_cycles(arr1, arr2, n):
    """Helper: Find all odd cycles (length > 1) in the permutation between two arrays."""
    inv2 = np.argsort(arr2)
    perm = inv2[arr1]
    
    visited = np.zeros(n, dtype=bool)
    cycles = []
    
    for i in range(n):
        if visited[i]:
            continue
        c = []
        cur = i
        while not visited[cur]:
            visited[cur] = True
            c.append(cur)
            cur = perm[cur]
        if len(c) > 1 and len(c) % 2 == 1:
            cycles.append(c)
    return cycles

def _get_1_factors(size):
    """
    Generates a 1-factorization of K_size (size must be even).
    Returns a list of lists of pairs. Each inner list is a perfect matching.
    Construction based on the standard 'pattern' method for K_2m.
    Vertices are 0..2m-1.
    """
    factors = []
    # For K_n, vertices are 0, ..., n-1. We fix vertex n-1 (the "infinity" vertex).
    # The other n-1 vertices (0, ..., n-2) are arranged in a regular (n-1)-gon.
    m = size - 1 # This is the modulo base for the (n-1)-gon vertices.
    inf_vertex = size - 1
    
    for turn in range(m):
        pairs = []
        # Edge connected to the "infinity" vertex
        pairs.append(tuple(sorted((turn, inf_vertex))))
        
        # Other edges (between vertices in the (n-1)-gon)
        for k in range(1, (size // 2)): # k goes from 1 to n/2 - 1
            # Vertex a and b are symmetric around 'turn' in the (n-1)-gon
            a = (turn - k + m) % m # +m to handle negative results of modulo correctly in Python
            b = (turn + k) % m
            pairs.append(tuple(sorted((a, b))))
        
        factors.append(pairs)
    return factors

def _find_best_trade_in_conjugate_space(M: np.ndarray, n: int, pairs_to_check: list[tuple[int, int]]) -> tuple | None:
    """
    Helper to find the best odd cycle trade in a given matrix M,
    considering specified pairs of indices.

    Returns:
        tuple: (key, idx1, idx2, cycle_elems) or None
    """
    best_trade_info = None # Stores (key, idx1, idx2, cycle_elems)

    for pair_idx_ordinal, (idx1, idx2) in enumerate(pairs_to_check):
        arr1 = M[idx1]
        arr2 = M[idx2]

        odd_cycles = _get_odd_cycles(arr1, arr2, n)

        for cyc in odd_cycles:
            # Key for minimization: (length (asc), pair_idx_ordinal (asc), sorted_cycle (asc))
            key = (len(cyc), pair_idx_ordinal, tuple(sorted(cyc)))

            if best_trade_info is None or key < best_trade_info[0]:
                best_trade_info = (key, idx1, idx2, cyc)

    return best_trade_info

def parity_switching_map(L: list[list[int]], n: int) -> list[list[int]]:
    """
    Multi-Perspective Local Trade Search across rows, columns, and symbols.
    Deterministic, attempts to apply a smallest odd-cycle trade.
    Fallback: swap rows 0 and 1.
    """
    S = np.array(L, dtype=int)

    # -- Define pairing strategies --
    def get_adjacent_pairs(size):
        return [(i, i + 1) for i in range(0, size - 1, 2)]

    def get_half_shifted_pairs(size):
        if size % 2 != 0:
            return []
        return [(i, i + size // 2) for i in range(size // 2)]

    def get_all_pairs(size):
        pairs = []
        for i in range(size):
            for j in range(i + 1, size):
                pairs.append((i, j))
        return pairs

    pairing_strategies = {
        'adjacent': get_adjacent_pairs,
        'half_shifted': get_half_shifted_pairs,
        'one_factor_flat': lambda size: sum(_get_1_factors(size), []),
        'exhaustive': get_all_pairs
    }

    ordered_strategies = [
        'adjacent',
        'half_shifted',
        'one_factor_flat',
        'exhaustive'
    ]

    conjugate_types = ['row', 'col', 'sym']

    for conjugate_type in conjugate_types:
        for strategy_name in ordered_strategies:
            get_pairs_func = pairing_strategies[strategy_name]

            current_M = S
            if conjugate_type == 'col':
                current_M = S.T
            elif conjugate_type == 'sym':
                # S_sym where S_sym[v, c] = r (symbol v is at row r in column c)
                current_M = np.argsort(S, axis=0)

            pairs_to_check = get_pairs_func(n)

            trade_info = _find_best_trade_in_conjugate_space(current_M, n, pairs_to_check)

            if trade_info:
                _, idx1, idx2, cycle_elems = trade_info
                T = S.copy()

                if conjugate_type == 'row':
                    T[idx1, cycle_elems], T[idx2, cycle_elems] = T[idx2, cycle_elems], T[idx1, cycle_elems]
                elif conjugate_type == 'col':
                    T[cycle_elems, idx1], T[cycle_elems, idx2] = T[cycle_elems, idx2], T[cycle_elems, idx1]
                elif conjugate_type == 'sym':
                    for c_idx in cycle_elems:
                        r1 = current_M[idx1, c_idx]
                        r2 = current_M[idx2, c_idx]
                        T[r1, c_idx] = idx2
                        T[r2, c_idx] = idx1
                return T.tolist()

    # Fallback
    T = S.copy()
    T[[0, 1], :] = T[[1, 0], :]
    return T.tolist()

# ============================================================
# EVOLVE-BLOCK-END
# ============================================================
\end{lstlisting}

\subsection*{Experiment 3 EVOLVE block}
\begin{lstlisting}[language=Python]
# ============================================================
# EVOLVE-BLOCK-START
# ============================================================
import numpy as np

# -- Helper: Apply Symbol Swap --
def _apply_symbol_swap(matrix, s1, s2):
    T = matrix.copy()
    mask_s1 = (matrix == s1)
    mask_s2 = (matrix == s2)
    T[mask_s1] = s2
    T[mask_s2] = s1
    return T

# -- Helper: Fixed Row/Col Swaps (for meta-trades) --
def _fixed_row_swap(matrix_np, r1, r2):
    M_copy = matrix_np.copy()
    M_copy[[r1, r2]] = M_copy[[r2, r1]]
    return M_copy

def _fixed_col_swap(matrix_np, c1, c2):
    M_copy = matrix_np.copy()
    M_copy[:, [c1, c2]] = M_copy[:, [c2, c1]]
    return M_copy

# -- Helper: Canonical Cycle --
def _canonical_cycle(cyc: list[int]) -> list[int]:
    if not cyc: return []
    m = min(cyc)
    k = cyc.index(m)
    return cyc[k:] + cyc[:k]

def _stabilized_canonical_cycle(cyc: list[int]) -> list[int]:
    """
    To ensure F(F(L))=L, the choice of cycle must be stable
    under reversal. We use the lexicographically smaller of a
    cycle and its reverse as the canonical representation.
    """
    if not cyc: return []
    c1 = _canonical_cycle(cyc)
    if len(c1) <= 2:
        return c1
    rev_cyc = c1[:1] + c1[1:][::-1]
    c2 = _canonical_cycle(rev_cyc)
    return min(c1, c2)

# -- Helper: Get all odd cycles between two lines --
def _get_odd_cycles(matrix, idx1, idx2, n: int, axis_is_col=False):
    if axis_is_col:
        line1, line2 = matrix[:, idx1], matrix[:, idx2]
    else:
        line1, line2 = matrix[idx1], matrix[idx2]

    inv2 = np.full(n, -1, dtype=int)
    inv2[line2] = np.arange(n, dtype=int)
    perm = inv2[line1] 
    
    visited = np.zeros(n, dtype=bool)
    cycles = []

    for i in range(n):
        if visited[i]: continue
        path_trace = []
        cur = i
        while not visited[cur]:
            visited[cur] = True
            path_trace.append(int(cur))
            cur = int(perm[cur])
        
        if cur in path_trace:
            start_idx = path_trace.index(cur)
            cyc = path_trace[start_idx:]
            if len(cyc) > 1 and (len(cyc) % 2 == 1):
                cycles.append(_stabilized_canonical_cycle(cyc))
    
    cycles.sort(key=lambda x: (len(x), x))
    return cycles

# Helper: Collect all canonical trades
def _collect_all_trades(M, n: int):
    trades = []
    for r1 in range(n):
        for r2 in range(r1 + 1, n):
            cycles = _get_odd_cycles(M, r1, r2, n, axis_is_col=False)
            for cyc in cycles:
                key = (len(cyc), cyc, r1, r2, 0) # 0 for row trade
                data = (cyc, False, r1, r2)
                trades.append((key, data))
    for c1 in range(n):
        for c2 in range(c1 + 1, n):
            cycles = _get_odd_cycles(M, c1, c2, n, axis_is_col=True)
            for cyc in cycles:
                key = (len(cyc), cyc, c1, c2, 1) # 1 for col trade
                data = (cyc, True, c1, c2)
                trades.append((key, data))
    
    trades.sort(key=lambda x: x[0])
    return trades

# -- Helper: Check for higher-priority trades --
def _has_tier1_trade(M_np, n: int):
    for i in range(n // 2):
        if _get_odd_cycles(M_np, 2 * i, 2 * i + 1, n, axis_is_col=False):
            return True
    return False

def _has_safe_tier2_trade(M_np, n: int):
    for i in range(n // 2):
        c1, c2 = 2 * i, 2 * i + 1
        cycles = _get_odd_cycles(M_np, c1, c2, n, axis_is_col=True)
        for cyc in cycles:
            T_prime = M_np.copy()
            ind = np.array(cyc, dtype=int)
            T_prime[ind, c1], T_prime[ind, c2] = T_prime[ind, c2], T_prime[ind, c1]
            if not _has_tier1_trade(T_prime, n):
                return True
    return False

# Helper: Check stability (No Tier 1 or Safe Tier 2 trades)
def _is_stable_matrix(M, n: int):
    return (not _has_tier1_trade(M, n)) and (not _has_safe_tier2_trade(M, n))

# -- NEW HELPER FUNCTION TO ENCAPSULATE TRADE FINDING LOGIC --
def _apply_first_odd_cycle_trade_if_stable(M_np, n: int):
    # 1. Search for Row Trades in adjacent pairs
    for i in range(n // 2):
        r1, r2 = 2 * i, 2 * i + 1
        cycles = _get_odd_cycles(M_np, r1, r2, n, axis_is_col=False)
        if cycles:
            cycle = cycles[0]
            T = M_np.copy()
            indices_to_swap = np.array(cycle, dtype=int)
            T[r1, indices_to_swap], T[r2, indices_to_swap] = T[r2, indices_to_swap], T[r1, indices_to_swap]
            return T

    # 2. Search for Column Trades in adjacent pairs
    for i in range(n // 2):
        c1, c2 = 2 * i, 2 * i + 1
        cycles = _get_odd_cycles(M_np, c1, c2, n, axis_is_col=True)
        for cycle in cycles:
            T = M_np.copy()
            indices_to_swap = np.array(cycle, dtype=int)
            T[indices_to_swap, c1], T[indices_to_swap, c2] = T[indices_to_swap, c2], T[indices_to_swap, c1]
            
            if not _has_tier1_trade(T, n):
                return T
    
    # 3. Tier 3: Exhaustive Global Search with Canonical Verification.
    all_candidates = _collect_all_trades(M_np, n)
    
    for cand_key, cand_data in all_candidates:
        cycle_to_apply, axis_is_col, idx1, idx2 = cand_data
        
        T = M_np.copy()
        indices = np.array(cycle_to_apply, dtype=int)
        if axis_is_col:
            T[indices, idx1], T[indices, idx2] = T[indices, idx2], T[indices, idx1]
        else:
            T[idx1, indices], T[idx2, indices] = T[idx2, indices], T[idx1, indices]
            
        # 1. Stability Check
        if not _is_stable_matrix(T, n):
            continue
            
        # 2. Canonical Verification
        is_canonical = True
        trades_in_T = _collect_all_trades(T, n)
        for t_key, t_data in trades_in_T:
            if t_key > cand_key:
                break
            if t_key == cand_key:
                break
            
            c_p, ax_p, i1_p, i2_p = t_data
            T_prime = T.copy()
            ind_p = np.array(c_p, dtype=int)
            if ax_p:
                T_prime[ind_p, i1_p], T_prime[ind_p, i2_p] = T_prime[ind_p, i2_p], T_prime[ind_p, i1_p]
            else:
                T_prime[i1_p, ind_p], T_prime[i2_p, ind_p] = T_prime[i2_p, ind_p], T_prime[i1_p, ind_p]
            
            if _is_stable_matrix(T_prime, n):
                is_canonical = False
                break
        
        if is_canonical:
            return T
            
    return None

def parity_switching_map(L: list[list[int]], n: int) -> list[list[int]]:
    """
    Deterministic involution on Latin squares.
    Searches for the first available 'odd cycle trade' in pairs of rows or columns.
    """
    S = np.array(L, dtype=int)
    result_S = _apply_first_odd_cycle_trade_if_stable(S, n)
    if result_S is not None:
        return result_S.tolist()

    # CANONICAL ORDER TIER 1: Row Swaps
    for r1 in range(n):
        for r2 in range(r1 + 1, n):
            S_transformed = _fixed_row_swap(S, r1, r2)
            result_transformed = _apply_first_odd_cycle_trade_if_stable(S_transformed, n)

            if result_transformed is not None:
                T_final = _fixed_row_swap(result_transformed, r1, r2)
                if _apply_first_odd_cycle_trade_if_stable(T_final, n) is None:
                    return T_final.tolist()

    # CANONICAL ORDER TIER 2: Column Swaps
    for c1 in range(n):
        for c2 in range(c1 + 1, n):
            S_transformed = _fixed_col_swap(S, c1, c2)
            result_transformed = _apply_first_odd_cycle_trade_if_stable(S_transformed, n)

            if result_transformed is not None:
                T_final = _fixed_col_swap(result_transformed, c1, c2)

                if _apply_first_odd_cycle_trade_if_stable(T_final, n) is not None:
                    continue

                has_row_swap_solution = False
                for r1_check in range(n):
                    for r2_check in range(r1_check + 1, n):
                        T_final_transformed = _fixed_row_swap(T_final, r1_check, r2_check)
                        if _apply_first_odd_cycle_trade_if_stable(T_final_transformed, n) is not None:
                            has_row_swap_solution = True
                            break
                    if has_row_swap_solution:
                        break
                
                if not has_row_swap_solution:
                    return T_final.tolist()

    # Fallback
    if n > 1:
        return _apply_symbol_swap(S, 0, 1).tolist()
        
    return S.tolist()
# ============================================================
# EVOLVE-BLOCK-END
# ============================================================
\end{lstlisting}

\section{Rota conjecture}
\label{app:rota}

\subsection{Experiment A policy: rank-5 trap-avoidance and circuit-breaking}
\label{app:policy-r5}

The evolved evolved policy (evolve blocks) is the following.

\medskip
\begin{lstlisting}[language=Python]
# EVOLVE-BLOCK-START
# AlphaEvolve may modify ONLY this block.
# Goal: evolve a strong, interpretable policy scoring function.
def policy_function(features: dict[str, float]) -> float:
    """
    A hybrid policy balancing robust trap avoidance with a highly focused repair strategy.

    Key Principles:
    1.  **Strict Insert Hygiene:** `Insert` moves are for safe progress. Any insert that
        breaks a full column or creates a small, critical trap is vetoed.
    2.  **High-Stakes Repairs:** `Repair` moves are for solving traps. They operate on a
        high-risk, high-reward basis. Escaping a trap yields a massive bonus, while
        breaking a full column incurs a massive penalty. A repair is only chosen if
        the expected reward (trap escape) vastly outweighs the risk (breaking progress).
    3.  **Dynamic Risk Assessment:** A "despair" metric allows the agent to be more
        aggressive with repairs late in the game, but the fundamental cost/benefit
        trade-offs for risky moves remain stark. This ensures that even desperate
        actions are directed towards high-potential solutions.
    """
    # Termination check
    if features.get("is_terminate_move", 0.0) > 0.5:
        return 1e9 if features.get("global_num_full_cols", 0.0) >= 5.0 else -1e9

    # Import for stochastic element during "panic" and exponential calculations for 'girth singularity'.
    import random
    import math

    score = 0.0

    # Feature extraction
    num_full = features.get("global_num_full_cols", 0.0)
    delta_valid = features.get("delta_num_valid", 0.0)
    becomes_full = features.get("target_col_becomes_full", 0.0) > 0.5
    breaks_full = features.get("source_col_was_full_and_is_not_anymore", 0.0) > 0.5
    is_repair = features.get("is_repair_move", 0.0) > 0.5
    is_insert = features.get("is_insert_move", 0.0) > 0.5

    target_deficit = features.get("target_rank_deficit_after", 0.0)
    source_deficit = features.get("source_rank_deficit_after", 0.0)
    circuit_size = features.get("circuit_size", 0.0)
    target_dups = features.get("target_dup_count_after", 0.0)

    # -- Adaptive Policy Modulators --
    # Trap Severity Index: Assigns discrete severity levels to circuits based on their size.
    trap_severity = 0.0
    trap_penalty_amplifier = 1.0

    if circuit_size > 0.0:
        if circuit_size <= 2.0:
            trap_severity = 10.0
            trap_penalty_amplifier = 2.5
        elif circuit_size <= 3.0:
            trap_severity = 4.0
            trap_penalty_amplifier = 1.5
        elif circuit_size <= 4.0:
            trap_severity = 1.5
            trap_penalty_amplifier = 1.2
        elif circuit_size <= 5.0:
            trap_severity = 0.5
            trap_penalty_amplifier = 1.05

    is_critical_trap_after = (trap_severity >= 1.5)
    has_trap = trap_severity > 0.0

    tension_factor = 1.0
    if has_trap:
        messiness = target_deficit + source_deficit + target_dups
        tension_factor = 1.0 + (trap_severity * 0.08) * (1.0 + messiness)
        tension_factor = min(tension_factor, 7.0)

    PHASE_EXPANSION = (num_full < 3.0)
    PHASE_STABILIZATION = (num_full >= 3.0 and num_full < 4.0)
    PHASE_ENDGAME = (num_full >= 4.0)

    urgency_factor = 1.0 + (num_full * 0.4)

    despair_index = 0.0
    if has_trap:
        despair_index += trap_severity * 0.5
        if is_critical_trap_after:
            despair_index += 1.0
        if is_repair:
            despair_index += 1.5 * trap_severity
            if is_critical_trap_after:
                despair_index += 0.5

    if delta_valid < 0:
        despair_index += 2.5
    elif (delta_valid == 0 and not becomes_full) and (PHASE_ENDGAME or has_trap):
        despair_index += 1.5

    despair_index = min(despair_index, 7.0)

    if is_insert:
        is_insert_vetoed = (
            breaks_full or
            is_critical_trap_after or
            (PHASE_EXPANSION and has_trap) or
            (has_trap and not becomes_full)
        )
        if is_insert_vetoed:
            return -1e9

        if has_trap: 
            score -= 80000.0 * trap_severity * trap_penalty_amplifier * urgency_factor

        if not (becomes_full and delta_valid > 0):
            unproductive_insert_penalty = 1500.0 * urgency_factor
            unproductive_insert_penalty *= tension_factor
            unproductive_insert_penalty *= (1.0 + despair_index * 0.2)
            score -= unproductive_insert_penalty

    elif is_repair:
        base_repair_penalty = 3500.0 * urgency_factor

        if has_trap:
            raw_desperation_discount = base_repair_penalty * (despair_index * 0.5)
            max_possible_discount = base_repair_penalty + 2000.0 * urgency_factor
            actual_desperation_discount = min(raw_desperation_discount, max_possible_discount)
            base_repair_penalty -= actual_desperation_discount

        score -= base_repair_penalty

        trap_was_resolved = not has_trap

        if source_deficit > 0 and not trap_was_resolved:
            score -= 1500.0 * source_deficit * urgency_factor

        if is_critical_trap_after:
            penalty = 180000.0 * trap_penalty_amplifier * urgency_factor
            if PHASE_STABILIZATION or PHASE_ENDGAME:
                penalty *= 2.5 
            score -= penalty
        elif has_trap:
            penalty_multiplier = 1.0
            if PHASE_STABILIZATION:
                penalty_multiplier = 2.0

            score -= trap_severity * 4500.0 * penalty_multiplier * trap_penalty_amplifier * urgency_factor
        else:
            trap_escape_reward = 300000.0 * urgency_factor
            if PHASE_STABILIZATION:
                trap_escape_reward *= 1.4
            score += trap_escape_reward

            if target_dups == 0:
                clean_repair_bonus = 100000.0 * urgency_factor * (1.0 + despair_index * 0.7)
                if PHASE_STABILIZATION or PHASE_ENDGAME:
                    clean_repair_bonus *= 1.6
                score += clean_repair_bonus

        is_endgame_gambit = PHASE_ENDGAME and becomes_full
        is_circuit_breaker = (PHASE_STABILIZATION or despair_index > 1.0) and trap_was_resolved

        if breaks_full and (is_endgame_gambit or is_circuit_breaker):
            base_sacrifice_bonus = 160000.0 * urgency_factor
            quality_multiplier = 1.0 if target_deficit == 0 else 0.8
            sacrifice_bonus = base_sacrifice_bonus * quality_multiplier
            sacrifice_bonus *= (1.0 + despair_index * 0.5) 
            score += sacrifice_bonus

        elif breaks_full:
            score -= 130000.0 * urgency_factor

    phase_bonus_multiplier = 1.0
    if PHASE_EXPANSION:
        phase_bonus_multiplier = 1.5
    elif PHASE_STABILIZATION:
        phase_bonus_multiplier = 1.2

    if target_deficit == 0:
        score += 3500.0 * urgency_factor * phase_bonus_multiplier
    if target_dups == 0:
        score += 3000.0 * urgency_factor * phase_bonus_multiplier

    if delta_valid > 0:
        progress_reward = 120000.0 * urgency_factor
        if is_repair:
            progress_reward *= (1.5 + despair_index * 0.5)
        if PHASE_ENDGAME:
            progress_reward += 30000.0 * urgency_factor
        score += progress_reward
    elif delta_valid < 0:
        regression_penalty = 100000.0 * urgency_factor
        if is_repair:
            regression_penalty *= (1.2 + despair_index * 0.3)
        score -= regression_penalty

    score -= 1500.0 * target_deficit * urgency_factor * tension_factor
    score -= 1000.0 * source_deficit * urgency_factor * tension_factor

    dup_penalty = 3000.0 * target_dups
    if target_dups > 1.0:
        dup_penalty *= 1.5
    score -= dup_penalty * urgency_factor * tension_factor

    if PHASE_ENDGAME or (PHASE_STABILIZATION and is_repair) or despair_index > 1.0:
        base_noise_mag = 400.0 * urgency_factor
        despair_amplification = math.exp(despair_index * 0.35)
        noise_mag = base_noise_mag * despair_amplification
        score += random.uniform(-noise_mag, noise_mag)

    return score
# EVOLVE-BLOCK-END
\end{lstlisting}

\subsection{Experiment B policy: rank-7 endgame pressure and numerology exploration}
\label{app:policy-r7}
The evolved evolved policy (evolve blocks) is the following.
\medskip

\begin{lstlisting}[language=Python]
# EVOLVE-BLOCK-START
# AlphaEvolve may modify ONLY this block.
# Goal: evolve a strong, interpretable policy scoring function for Rank 7.
def policy_function(features: dict[str, float]) -> float:
    """
    A hybrid policy balancing robust trap avoidance with a highly focused repair strategy.
    Adapted for Rank 7 (higher complexity, longer games).
    """
    # Termination check: Threshold updated to 7.0 for Rank 7
    if features.get("is_terminate_move", 0.0) > 0.5:
        return 1e9 if features.get("global_num_full_cols", 0.0) >= 7.0 else -1e9

    import random

    score = 0.0

    # Feature extraction
    num_full = features.get("global_num_full_cols", 0.0)
    delta_valid = features.get("delta_num_valid", 0.0)
    becomes_full = features.get("target_col_becomes_full", 0.0) > 0.5
    breaks_full = features.get("source_col_was_full_and_is_not_anymore", 0.0) > 0.5
    is_repair = features.get("is_repair_move", 0.0) > 0.5
    is_insert = features.get("is_insert_move", 0.0) > 0.5

    target_deficit = features.get("target_rank_deficit_after", 0.0)
    source_deficit = features.get("source_rank_deficit_after", 0.0)
    circuit_size = features.get("circuit_size", 0.0)
    target_dups = features.get("target_dup_count_after", 0.0)

    # -- State & Phase Definitions --
    PHASE_ENDGAME = (num_full >= 6.0)

    is_desperate_endgame_state = (PHASE_ENDGAME and not becomes_full and delta_valid <= 0)

    critical_circuit_threshold = 3.0 + \
                                 (num_full >= 4.0) + \
                                 (num_full >= 6.0) + \
                                 is_desperate_endgame_state

    has_trap_after = circuit_size > 0.0
    is_critical_trap_after = has_trap_after and (circuit_size <= critical_circuit_threshold)

    urgency_factor = 1.0 + (num_full / 7.0)**2.5 * 5.8
    time_pressure_factor = 1.0 + (num_full / 7.0)**2.2 * 4.0

    # 1. Primary Objective: Progress vs. Regression
    if delta_valid > 0:
        reward = 120000.0 * delta_valid
        if PHASE_ENDGAME:
             reward *= 1.5
        score += reward
    elif delta_valid < 0:
        score -= 100000.0 * abs(delta_valid)

    # 2. Move-Specific Incentives
    if is_insert:
        if breaks_full or is_critical_trap_after:
            return -1e9

        if becomes_full:
            score += 80000.0
            if target_deficit == 0 and target_dups == 0:
                score += 40000.0
        elif delta_valid == 0: 
            if target_deficit <= 1.0 and target_dups == 0.0 and not has_trap_after:
                 score += 25000.0 * (1.0 + num_full / 7.0)

            stalling_penalty_base = 2000.0
            if has_trap_after:
                stalling_penalty_base += 15000.0
                if is_critical_trap_after:
                    stalling_penalty_base += 30000.0
                stalling_penalty_base += 5000.0 * (target_deficit + target_dups)

            if is_desperate_endgame_state:
                is_messy_stall = has_trap_after or target_deficit > 1.0 or target_dups > 0
                if is_messy_stall:
                    stalling_penalty_base += 40000.0
                else:
                    stalling_penalty_base += random.uniform(6000.0, 22000.0)
                    if random.random() < 0.6:
                        score += random.uniform(7000.0, 18000.0)

            endgame_multiplier = 2.0 if PHASE_ENDGAME else 1.0
            stalling_penalty = stalling_penalty_base * endgame_multiplier * time_pressure_factor
            score -= stalling_penalty

    elif is_repair:
        trap_was_resolved_by_this_move = not has_trap_after
        if trap_was_resolved_by_this_move:
            reward = 300000.0
            if PHASE_ENDGAME:
                reward *= 2.0 
            score += reward

        trap_penalty = 0.0
        if is_critical_trap_after:
            trap_penalty = 200000.0
            if PHASE_ENDGAME:
                trap_penalty *= 3.0
        elif has_trap_after: 
            trap_penalty = 50000.0
            if PHASE_ENDGAME:
                trap_penalty *= 1.5
        score -= trap_penalty

        if breaks_full:
            penalty = 150000.0
            if trap_was_resolved_by_this_move:
                penalty = -25000.0
            elif is_desperate_endgame_state:
                 is_clean_after = not has_trap_after and target_deficit == 0.0 and source_deficit == 0.0
                 if is_clean_after:
                     penalty = -15000.0
                 else:
                     penalty *= 0.4
            elif PHASE_ENDGAME:
                penalty *= 1.5
            score -= penalty

        base_repair_cost = 4000.0

        if trap_was_resolved_by_this_move:
            base_repair_cost -= 2500.0
        elif has_trap_after:
            base_repair_cost += 1000.0 * circuit_size

        if target_deficit <= 1.0 and target_dups == 0.0:
            base_repair_cost -= 1500.0

        if PHASE_ENDGAME and (is_critical_trap_after or is_desperate_endgame_state):
            repair_cost_multiplier = 0.3
        else:
            repair_cost_multiplier = time_pressure_factor
        score -= base_repair_cost * repair_cost_multiplier

    # 3. State Quality Penalties
    quality_penalty = 0.0
    quality_penalty += 2000.0 * target_deficit
    quality_penalty += 1500.0 * source_deficit
    quality_penalty += 3500.0 * target_dups
    quality_penalty *= time_pressure_factor
    score -= quality_penalty

    # 4. Exploration
    if num_full >= 5.0:
        score += random.uniform(-4000.0, 4000.0)

    # 5. "Numerology" Heuristic
    state_signature = int(num_full * 1e2) + \
                      int(target_deficit * 1e1) + \
                      int(source_deficit * 7) + \
                      int(target_dups * 5) + \
                      int(circuit_size)

    is_fortunate = (state_signature % 13 == 0)
    is_cursed = (state_signature % 19 == 0)

    if num_full >= 4.0:
        numerology_bonus = 0.0
        if is_fortunate and not is_cursed:
            numerology_bonus = 35000.0
        elif is_cursed and not is_fortunate:
            numerology_bonus = -45000.0
        elif is_cursed and is_fortunate:
            numerology_bonus = -80000.0
        score += numerology_bonus

    score *= urgency_factor
    return score
# EVOLVE-BLOCK-END
\end{lstlisting}

\subsection{Experiment C policy: scale-aware trap energy and pressure dynamics}
\label{app:policy-var}

\subsubsection{AI Summary of the experiment}
The following is the short description of the evolution process generated by Gemini

\begin{tcolorbox}[
  enhanced,
  breakable,
  title={Gemini-generated summary},
  colback=gray!3,
  colframe=black!30,
  title after break={Gemini-generated summary (continued)}
]
The goal of these programs is to evolve a general-purpose constructive policy for Rota's Basis Conjecture that generalizes across variable vector space dimensions (Ranks 5 through 13) without hardcoding dimension-specific logic. The solvers must balance rapid basis construction in low dimensions with high-precision conflict resolution in high dimensions to identify independent transversals and resolve "trap" instances containing hidden dependency circuits.

\noindent \textbf{Phase-Dependent Heuristic Scaling}

The initial successful strategies moved away from static logic, recognizing that different stages of construction require different behaviors. Program 0\_1 introduced phase-dependent heuristic scaling, utilizing the progress\_ratio to increase the magnitude of rewards and penalties as the solution neared completion. This created an "Endgame" phase with higher urgency. Program 0\_15 refined this by adding a rank-relative urgency factor, ensuring that the transition from exploration to precision was scaled appropriate to the dimension (nn) of the problem, preventing the solver from being too aggressive in high-rank spaces where precision is paramount.

\noindent \textbf{Non-Linear Complexity and Chaotic Exploration}

A significant performance jump occurred with the introduction of non-linear adjustments to handle the exponential increase in search space difficulty as rank increases. Program 0\_200 implemented non-linear rank-complexity scaling, allowing the policy to apply drastically different move priorities for Rank 5 versus Rank 13. Simultaneously, to prevent stagnation in local optima (traps), this program introduced a feature-seeded pseudo-chaotic exploration oscillator. This mechanism injected structured noise into the decision-making process, allowing the agent to shake loose from suboptimal configurations.

\noindent \textbf{Strategic Sacrifice and Trap Resolution}

As the solvers improved, the ability to resolve specific "trap" instances (hidden low-girth circuits) became the primary differentiator. Program 0\_276 achieved a major score increase by formalizing sacrificial repair moves. Instead of strictly preserving completed columns, this strategy allowed the solver to break a full column if it resolved a dependency circuit. This was coupled with non-linear trap severity mitigation, which weighted the penalty of a dependency loop inversely to its size, prioritizing the destruction of tight, critical loops over larger, looser dependencies.

\noindent \textbf{Dynamic Policy Cohesion and Bio-Mimetic Regulation}

Mid-tier programs focused on smoother transitions between search states. Program 0\_1034 utilized Dynamic Policy Cohesion (DPC), using a tanh-modulated signal to transition fluidly from expansion to consolidation rather than using hard thresholds. Later, programs like 0\_1882 and 0\_5406 adopted a neurotransmitter-inspired dynamic regulation system. This modeled internal states analogous to "Dopamine" (reward sensitivity) and "Cortisol" (stress/urgency). These variables modulated the dynamic deficit exponent, enforcing increasingly strict mathematical independence constraints as the "stress" of the system (progress and rank) increased.

\noindent \textbf{Trap Energy and Barrier Tunneling}

The final evolutionary leap abandoned simple penalties in favor of physics-inspired landscape navigation. Starting with 0\_10675, the programs began modeling dependency loops as trap energy. The key insight was the implementation of barrier porosity, which allowed the agent to "tunnel" through high-cost barriers. This logic mathematically incentivized heroic sacrifices—intentionally breaking a valid basis configuration to escape a local energy minimum—but only if the resulting reduction in trap energy was significant. The highest performing program, 0\_14464, refined this by combining exponential progress-based pressure with circuit-aware trap mitigation, enabling it to solve the highest rank instances by strategically deconstructing its own progress to bypass deep structural traps.
\end{tcolorbox}

\subsubsection{The evolved algorithm} After 21.5 hours we arrived to the following evolved block (run 14464).
\medskip
\begin{lstlisting}[language=Python]
# EVOLVE-BLOCK-START
# AlphaEvolve may modify ONLY this block.
# Goal: evolve a policy that generalizes across Ranks 5-13.
def policy_function(features: dict[str, float]) -> float:
    """
    A general-purpose policy for Rota's Basis Conjecture.
    Scales logic based on 'rank' and 'progress_ratio'.
    """
    rank = features.get("rank", 5.0)
    num_full = features.get("global_num_full_cols", 0.0)
    progress_ratio = features.get("progress_ratio", 0.0) # 0.0 to 1.0

    # Dynamic Termination: Only terminate if we have ALL columns full
    if features.get("is_terminate_move", 0.0) > 0.5:
        return 1e12 if num_full >= rank else -1e12

    import random
    import math

    score = 0.0

    delta_valid = features.get("delta_num_valid", 0.0)
    becomes_full = features.get("target_col_becomes_full", 0.0) > 0.5
    breaks_full = features.get("source_col_was_full_and_is_not_anymore", 0.0) > 0.5
    is_repair = features.get("is_repair_move", 0.0) > 0.5
    is_insert = features.get("is_insert_move", 0.0) > 0.5

    target_deficit = features.get("target_rank_deficit_after", 0.0)
    source_deficit = features.get("source_rank_deficit_after", 0.0)
    circuit_size = features.get("circuit_size", 0.0)
    target_dups = features.get("target_dup_count_after", 0.0)
    # -- Common Penalty Coefficients --
    C_DEFICIT_BASE = 4000.0
    C_DUP_BASE = 3000.0

    # -- Rank & Pressure Scaling --
    rank_scaler = rank / 5.0

    # -- Trap Physics --
    trap_energy = 0.0
    if circuit_size > 0.0:
        trap_energy = 20.0 / (max(circuit_size, 1.5) - 1.0) * rank_scaler

    trap_intensity = 0.0
    if trap_energy > 0.0:
        trap_intensity = min(1.0, trap_energy / (10.0 * rank_scaler + 1e-6))

    progress_intensity = progress_ratio
    early_midgame_intensity = 1.0 - progress_ratio

    # -- Stagnation Physics: For non-circuit structural issues --
    stagnation_intensity = 0.0
    if circuit_size == 0.0 and (target_deficit > 0.0 or target_dups > 0.0 or (is_repair and source_deficit > 0.0)):
        stagnation_score = target_deficit * 2.0 + target_dups * 2.0
        if is_repair:
            stagnation_score += source_deficit * 1.0
        stagnation_intensity = min(1.0, (stagnation_score / (rank * 2.0 + 1e-6)) * 0.6)

    # -- Pressure Engine --
    pressure = 1.0
    columns_remaining = max(1.0, rank - num_full) 

    exponential_rate = (8.0 * math.sqrt(rank_scaler)) * (1.0 - trap_intensity * 0.6 - stagnation_intensity * 0.3)

    mid_game_start = 0.5
    end_game_start = 0.75 

    if progress_ratio > mid_game_start:
        linear_progress = (progress_ratio - mid_game_start) / (end_game_start - mid_game_start + 1e-6)
        pressure += 0.5 * linear_progress * rank_scaler

    if progress_ratio > end_game_start:
        pressure += math.exp(exponential_rate * (progress_ratio - end_game_start)) - 1.0

    remaining_ratio = columns_remaining / rank
    if progress_ratio > 0.8 or columns_remaining <= 3:
        singularity_imminence_factor = 20.0 * rank_scaler * (1.0 - trap_intensity * 0.7 - stagnation_intensity * 0.3)
        singularity_pressure = math.exp(-singularity_imminence_factor * remaining_ratio)
        pressure += singularity_pressure * max(0.0, 1.0 - trap_intensity - stagnation_intensity * 0.5)

    pressure = max(1.0, pressure * ((1.0 + rank_scaler / 2.0) * (1.0 - trap_intensity * 0.2 - stagnation_intensity * 0.1)))

    # -- Barrier for breaks_full --
    barrier_porosity = 1.0
    if is_repair and trap_energy > 0.0:
        reduction_from_trap = min(0.8, trap_intensity * 0.8)

        improvement_signal = 0.0
        if target_deficit == 0.0 and target_dups == 0.0:
            improvement_signal = 1.0
        elif target_deficit < rank / 5.0 or target_dups < 2.0:
            improvement_signal = 0.5

        reduction_from_improvement = min(0.3, improvement_signal * math.sqrt(rank_scaler) * 0.5)
        reduction_from_urgency_baseline = min(0.4, math.sqrt(rank_scaler) * 0.2)

        total_reduction = reduction_from_trap + reduction_from_improvement + reduction_from_urgency_baseline
        barrier_porosity = 1.0 - min(0.95, total_reduction)

    current_barrier_height = (100000.0 * rank_scaler) * barrier_porosity

    # -- Structural Instability Penalty (SIP) --
    structural_instability_penalty = 0.0
    crisis_signal_for_sip = max(trap_intensity, stagnation_intensity)
    if crisis_signal_for_sip > 0.1:
        degeneracy_term = target_deficit * target_dups
        entanglement_term = 0.0
        if is_repair:
            entanglement_term = 0.5 * source_deficit * target_deficit

        total_instability = degeneracy_term + entanglement_term
        structural_instability_penalty = (100000.0 * rank_scaler * (1.0 + pressure * 0.5)) * total_instability * trap_intensity
        structural_instability_penalty = min(structural_instability_penalty, 750000.0 * rank_scaler)

    # -- Precision Multiplier --
    precision_multiplier = ((1.0 + pressure * 0.5) * rank_scaler) * (
        1.0 + progress_intensity * 1.5 - trap_intensity * 0.75 - stagnation_intensity * 0.5
    )
    precision_multiplier = min(precision_multiplier, 10.0 * rank_scaler * (1.0 + pressure * 0.5))

    # -- Singularity Avoidance Penalty --
    singularity_avoidance_penalty = 0.0
    if target_deficit > 0.0 and target_dups > 0.0 and circuit_size > 0.0:
        base_penalty = 1.5e6 * rank_scaler
        progress_amplification = math.exp(5.0 * progress_ratio) - 1.0
        singularity_avoidance_penalty = base_penalty * (1.0 + progress_amplification) * (1.0 + pressure)
        singularity_avoidance_penalty *= (1.0 - trap_intensity * 0.5)
        singularity_avoidance_penalty = min(singularity_avoidance_penalty, 5.0e6 * rank_scaler * (1.0 + pressure))

    # Heroic sacrifice
    if is_repair and breaks_full and circuit_size == 0.0 and target_dups == 0.0 and target_deficit == 0.0:
        meltdown_crisis_signal = max(trap_intensity, stagnation_intensity * 0.8)
        meltdown_reward_magnitude = 250000.0 * rank_scaler * meltdown_crisis_signal * (1.0 + pressure)
        min_perfect_resolution_reward = 50000.0 * rank_scaler * (1.0 + pressure)
        current_barrier_height = -max(meltdown_reward_magnitude, min_perfect_resolution_reward)

    # Inserts
    if is_insert:
        if breaks_full:
            return -1e9

        if delta_valid > 0:
            score += 80000.0 * delta_valid * (1.0 + pressure + early_midgame_intensity * 0.5 + progress_intensity * 0.25)
        elif delta_valid < 0:
            return -1e9

        if becomes_full:
            score += 200000.0 * (1.0 + pressure + progress_intensity * 0.5)

        if circuit_size > 0.0:
            circuit_creation_penalty = 750000.0 * trap_energy * pressure * (1.0 - early_midgame_intensity * 0.3)
            score -= circuit_creation_penalty

        score -= C_DEFICIT_BASE * math.pow(target_deficit, 1.5) * precision_multiplier
        score -= C_DUP_BASE * math.pow(target_dups, 1.2) * precision_multiplier
        score -= structural_instability_penalty
        score -= singularity_avoidance_penalty

        if delta_valid == 0 and not becomes_full:
             score -= 1000.0 * (1.0 + pressure + progress_intensity * 0.5)

    # Repairs
    elif is_repair:
        score -= 2000.0 * rank_scaler * (1.0 - trap_intensity * 0.5)

        score -= C_DEFICIT_BASE * math.pow(target_deficit, 1.5) * precision_multiplier
        score -= C_DUP_BASE * math.pow(target_dups, 1.2) * precision_multiplier
        score -= C_DEFICIT_BASE * math.pow(source_deficit, 1.5) * precision_multiplier
        score -= structural_instability_penalty
        score -= singularity_avoidance_penalty

        if delta_valid > 0:
            score += 80000.0 * delta_valid * (1.0 + pressure + trap_energy * 0.4)
        elif delta_valid < 0:
            validity_decrease_penalty = 40000.0 * delta_valid * (1.0 + pressure)
            leniency_reduction_factor = 1.0 - (trap_intensity * 0.75 * (1.0 - progress_ratio * 0.5))
            validity_decrease_penalty *= leniency_reduction_factor
            score += validity_decrease_penalty

        score -= current_barrier_height

        if circuit_size == 0.0:
            circuit_break_reward = 800000.0 * rank_scaler * pressure * (1.0 + trap_intensity)
            score += circuit_break_reward
        else:
            failed_repair_penalty = 40000.0 * trap_energy * pressure
            score -= failed_repair_penalty

    if target_deficit == 0:
        score += 2000.0 * rank_scaler * (1.0 + pressure * 0.5 + progress_intensity)
    if target_dups == 0:
        score += 1000.0 * rank_scaler * (1.0 + pressure * 0.5 + progress_intensity)

    combined_frustration = trap_energy + (stagnation_intensity * 5.0 * rank_scaler)
    noise_magnitude = 3500.0 * rank_scaler * min(3.0, combined_frustration / (1.0 + pressure * 0.25 + 1e-6))
    score += random.uniform(-noise_magnitude, noise_magnitude)

    return score
# EVOLVE-BLOCK-END
\end{lstlisting}


\end{document}